\numberwithin{equation}{section}
\numberwithin{figure}{section}
\theoremstyle{plain}
\newtheorem{thm}{\protect\theoremname}[section]
  \theoremstyle{plain}
  \theoremstyle{plain}
  \newtheorem{prop}[thm]{\protect\propositionname}
  \theoremstyle{remark}
  \newtheorem{rem}[thm]{\protect\remarkname}
  \theoremstyle{plain}
  \newtheorem{cor}[thm]{\protect\corollaryname}
\newtheorem{theorem}{Theorem}[section]
\newtheorem{definition}[theorem]{Definition}
\newtheorem{example}[theorem]{Example}
\providecommand{\lemmaname}{Lemma}
  \providecommand{\propositionname}{Proposition}
  \providecommand{\remarkname}{Remark}
\providecommand{\theoremname}{Theorem}
\providecommand{\lemmaname}{Lemma}
  \providecommand{\propositionname}{Proposition}
  \providecommand{\remarkname}{Remark}
\providecommand{\theoremname}{Theorem}
\providecommand{\corollaryname}{Corollary}
  \providecommand{\lemmaname}{Lemma}
  \providecommand{\propositionname}{Proposition}
  \providecommand{\remarkname}{Remark}
\providecommand{\theoremname}{Theorem}
  \providecommand{\corollaryname}{Corollary}
  \providecommand{\lemmaname}{Lemma}
  \providecommand{\propositionname}{Proposition}
  \providecommand{\remarkname}{Remark}
\providecommand{\theoremname}{Theorem}
\begin{document}

\title[Lindenstrauss spaces whose duals lack the $w^*$-FPP]{Separable Lindenstrauss spaces whose duals lack the weak$^*$
fixed point property for nonexpansive mappings}

\author{E. Casini}

\address{Dipartimento di Scienza e Alta Tecnologia, Università dell'Insubria,
via Valleggio 11, 22100 Como, Italy }

\email{emanuele.casini@uninsunbria.it}

\author{E. Miglierina}

\address{Dipartimento di Discipline Matematiche, Finanza Matematica ed Econometria,
Università Cattolica del Sacro Cuore, Via Necchi 9, 20123 Milano,
Italy }

\email{enrico.miglierina@unicatt.it}

\author{\L. Piasecki}

\address{Instytut Matematyki,
Uniwersytet Marii Curie-Sk{\l}odowskiej, Pl. Marii Curie-Sk{\l}odowskiej 1, 20-031 Lublin, Poland}

\email{piasecki@hektor.umcs.lublin.pl}
\begin{abstract}
In this paper we study the $w^*$-fixed point property for
nonexpansive mappings. First we show that the dual space $X^*$ lacks
the $w^*$-fixed point property whenever $X$ contains an isometric
copy of the space $c$. Then, the main result of our paper provides
several characterizations of weak-star topologies that fail the
fixed point property for nonexpansive mappings in $\ell_1$ space.
This result allows us to obtain a  characterization of all separable
Lindenstrauss spaces $X$ inducing the failure of $w^*$-fixed point
property in $X^*$.
\end{abstract}

\subjclass[2010]{Primary 47H09; Secondary 46B25}

\keywords{nonexpansive mappings, $w^*$-fixed point property, Lindenstrauss spaces }

\maketitle

\section{Introduction}
Let $X$ be an infinite dimensional real Banach space and let us denote by $B_X$ its closed unit
ball. A nonempty bounded closed and convex subset $C$ of $X$ has the fixed point property
(shortly, FPP) if each nonexpansive mapping (i.e., the mapping $T:C\rightarrow C$ such that $\|T(x)-T(y)\|\leq \|x-y\|$ for all $x,y\in C$) has a fixed point.
The space $X^*$ is said to have the $\sigma(X^*,X)$-fixed
point property ($\sigma(X^*,X)$-FPP) if every nonempty, convex,
$w^*$-compact subset $C$ of $X^*$ has the FPP. The study of the
$\sigma(X^*,X)$-FPP reveals to be of special interest whenever a
dual space has different preduals. Indeed, the behaviour with
respect to the $\sigma(X^*,X)$-FPP of a given dual space can be
completely different if we consider two different preduals. For
instance, this situation occurs when we consider the space $\ell_1$
and its preduals $c_0$ and $c$ where it is well-known (see
\cite{Karlovitz1976}) that $\ell_1$ has the $\sigma(\ell_1,c_0)$-FPP
whereas it lacks the $\sigma(\ell_1,c)$-FPP.

The main aim of this paper is
to study some structural features of a separable space $X$ linked to $\sigma(X^*,X)$-FPP on its dual.

At the beginning of Section 3, we state a sufficient condition for
the failure of $\sigma(X^*,X)$-FPP. Indeed, Theorem \ref{thm c
inside predual} shows that the presence of an isometric copy of $c$
in a separable space $X$ implies the failure of the
$\sigma(X^*,X)$-FPP. This theorem extends a result of Smyth (Theorem
1 in \cite{Smyth1994}) to a broader class of spaces. Moreover, it
allows us to show that all the separable Lindenstrauss spaces $X$
(i.e., the space such that its dual is a space $L_1(\mu)$  for some
measure $\mu$ ), whose duals are nonseparable, lack the
$\sigma(X^*,X)$-FPP. Taking into account these last facts, it seems to
be natural to investigate if the presence of an isometric copy of
$c$ in the space $X$ is also a necessary condition for the failure
of $\sigma(X^*,X)$-FPP. The simple example where $X=\ell_1$ shows
that the answer is negative in a general framework. Moreover, by
considering a suitable class of hyperplanes of $c$, we are able to
show that the answer remains negative even if we add the assumption
that $X$ is a separable Lindenstrauss space. This class of
hyperplanes of $c$ with duals isometric to $\ell_1$ and failing the
$w^*$-FPP, will play an important role in this paper and
subsequently their elements will be indicate as {\it "bad" $W_f$}
(see Section \ref{section W_f} for a detailed description of these
spaces). The first interesting result involving this class of spaces
is Theorem \ref{W_f in a predual of X}, where we prove that if a
separable space $X$ contains a {\it "bad" $W_f$} then
$\sigma(X^*,X)$-FPP still fails. A simple but relevant consequence
of this theorem is Remark \ref{W_f + quotient} where it is stated
that the dual space $X^*$ lacks the $\sigma(X^*,X)$-FPP whenever
there is a quotient of $X$ that contains an isometric copy of a
"bad" $W_f$. The last section is devoted to the characterization of
the $w^*$-topologies that lack the $\sigma(\ell_1,X)$-FPP. Theorem
\ref{OGT}, which is the main theorem of this paper, lists several
properties of a predual $X$ of $\ell_1$ that are all equivalent to
the lack of $\sigma(\ell_1,X)$-FPP for $\ell_1$. Among them, one
property is exactly the structural condition appeared in Remark
\ref{W_f + quotient}. Another property listed in this result (see
condition (\ref{item subsequence}) in Theorem \ref{OGT}) seems to be
meaningful. It is related to the $w^*$-cluster points of the
standard basis of $\ell_1$ and it allows us to extend Theorem $8$ in
\cite{Japon-Prus2004} in the case of $w^*$-topologies. Indeed, we
can prove this theorem without assuming the strong assumption on
$w^*$-convergence of the standard basis of $\ell_1$ that was made in
\cite{Japon-Prus2004}.

Throughout all the paper we will follow the standard terminology and
notations. In particular, it is well-known that $c^*$ can be
isometrically identified with $\ell_1$ in the following way. For
every $x^*\in c^*$ there exists a unique element $f=(f(1),f(2),\dots)\in \ell_1$
such that
\[
x^*(x)=\sum_{n=0}^\infty f(n+1)x(n)=f(x)
\]
with $x=(x(1),x(2),\dots)\in c$ and $x(0)=\lim x(n)$.

\section{A class of hyperplanes in the space of convergent sequences}\label{section W_f}
This section is devoted to recall some properties of a class of hyperplanes of $c$ that will play a crucial role in the sequel of our paper. For the convenience of the reader we repeat some materials from \cite{Casini-Miglierina-Piasecki2014} without proofs, thus making our exposition self-contained. Moreover, we prove some additional properties of these hyperplanes directly related to the topic studied in the present paper.

Let $f\in\ell_1=c^*$ be such that $\|f\|=1$. We
consider the hyperplane of $c$ defined by
\[
W_f=\left\lbrace x\in c:f(x)=0 \right\rbrace.
\]
In \cite{Casini-Miglierina-Piasecki2014}, the following results are proved:
\begin{enumerate}
\item[(I)] there exists $j_0\geq1$ such that $|f(j_0)|\geq 1/2$ if and only if $W_f^*$ is isometric to $\ell_1$.
\item[(II)] there exists $j_0\geq2$ such that $|f(j_0)|\geq 1/2$ if and only if $W_f$ is isometric to $c$.

\end{enumerate}

For our aims, an important case to be considered is when $|f(1)|\geq 1/2$ and $|f(j)|< 1/2$ for every $j\geq2$.
Under these additional assumptions, Theorem 4.3 in \cite{Casini-Miglierina-Piasecki2014} identifies $W_f^*$ and $\ell_1$ by giving the following dual action: for every $x^*\in W_f^*$ there exists a unique element $g\in\ell_1$ such that
\begin{equation}\label{dualityW}
x^*(x)=\sum_{n=1}^\infty g(n)x(n)=g(x)
\end{equation}
where $x=(x(1),x(2),\dots)\in W_f$. We conclude this section by proving some
additional properties of the spaces $W_f$ that will be useful in the
sequel. The first proposition gives a necessary and sufficient
condition for the existence of a subspace of $W_f$ isometric to $c$.
\begin{prop}\label{c inside predual}
Let $f\in \ell_1=c^*$ be such that
$\left\|f\right\|=1$ and $\left|f(1)\right|\geq \frac12$. Then the
following statements are equivalent.
\begin{enumerate}
\item $W_f$ contains a subspace isometric to $c$.
\item $\left|f(1)\right|= \frac12$, $\left\{n\in \mathbb{N}:f(1) f(n+1)>0\right\}$
is a finite set and $\left\{n\in \mathbb{N}:f(n+1)=0\right\}$ is an
infinite set.
\end{enumerate}
\end{prop}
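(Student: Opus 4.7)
The plan splits naturally into the two implications.

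For $(2)\Rightarrow(1)$, I would build an explicit isometric embedding $\Phi:c\to W_f$. After replacing $f$ by $-f$ if necessary (which does not alter $W_f$) I may assume $f(1)=1/2$. Write $A^+=\{n\geq1:f(n+1)>0\}$ (finite by (2)), $A^-=\{n\geq1:f(n+1)<0\}$, and $B=\{n\geq1:f(n+1)=0\}$ (infinite by (2)); enumerate $B=\{n_1<n_2<\cdots\}$. Define $u_0\in c$ by $u_0(j)=-1$ on $A^+$ and $u_0(j)=+1$ on $A^-\cup B$; since $A^+$ is finite, $u_0\in c$ with $\lim u_0=1$, and from $\sum_{j\geq1}|f(j+1)|=1/2$ a direct computation gives $f(u_0)=0$. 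The vectors $u_k=e_{n_k}$ ($k\geq1$) lie in $W_f$ since $f(n_k+1)=0$. Set $\Phi(x)=x_0u_0+\sum_{k\geq1}(x_k-x_0)u_k$ with $x_0=\lim x$; a coordinate check shows $\Phi(x)(j)=\pm x_0$ on $A^+\cup A^-$ and $\Phi(x)(n_k)=x_k$, whence $\|\Phi(x)\|_\infty=\sup_{k\geq1}|x_k|=\|x\|_c$, yielding the required isometric embedding.

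For $(1)\Rightarrow(2)$, let $\psi:c\to Y\subseteq W_f$ be an isometric isomorphism and put $y_0=\psi(e_0)$, $y_n=\psi(e_n)$ for $n\geq1$. Since $e_n\to0$ weakly in $c$ and $\psi$ is bounded, $y_n\to0$ weakly in $c$; in particular $y_n(j)\to0$ pointwise and $\lim y_n\to0$. The identity $\|y_0+y_n\|_\infty=\|e_0+e_n\|_\infty=2$ cannot be realized ``at infinity'' for large $n$, so there is $j_n^*\in\mathbb{N}$ with $y_0(j_n^*)=y_n(j_n^*)=\pm1$. Pointwise weak convergence forces $j_n^*\to\infty$ through a subsequence of distinct indices, and after a further passage to a subsequence (and possibly changing the sign of $f$) we may take $y_0(j_n^*)=y_n(j_n^*)=1$, so $\lim y_0=1$. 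Plugging $y_0$ into $f(y_0)=0$ gives
\[
|f(1)|=\Bigl|\sum_{j\geq1}f(j+1)y_0(j)\Bigr|\leq\sum_{j\geq1}|f(j+1)|=1-|f(1)|,
\]
so $|f(1)|=1/2$; the equality case forces $y_0(j)=-\operatorname{sign}(f(1)f(j+1))$ for every $j\notin B$. Since $\lim y_0=1$ whereas $y_0=-1$ on $A^+=\{n:f(1)f(n+1)>0\}$, the set $A^+$ must be finite. This settles (a) and (b).

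The infiniteness of $B$ is the delicate step. Suppose $B$ is finite; then $j_n^*\in A^-$ for all sufficiently large $n$. Combining $\|y_n-y_0\|_\infty\leq1$ with the pointwise form of $y_0$ yields $y_n(j)\in[0,1]$ on $A^-$ and $y_n(j)\in[-1,0]$ on $A^+$. Inserting this into $f(y_n)=0$, every term $f(j+1)y_n(j)$ is nonpositive and the one at $j=j_n^*\in A^-$ is strictly negative, so $\alpha_n:=\lim y_n\geq 2|f(j_n^*+1)|>0$. On the other hand, $\|y_n+y_m\|_\infty=\|e_n+e_m\|_\infty=1$ combined with $y_n(j_n^*)=1$ and $y_m\geq0$ on $A^-$ forces $y_m(j_n^*)=0$ for every $m\neq n$; since $j_n^*\to\infty$, this forces $\alpha_m=0$ for every fixed $m\geq1$. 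Choosing $m$ large enough for both statements to apply produces the contradiction, so $B$ must be infinite. The hardest part is exactly this last step: the two isometric identities $\|y_0+y_n\|=2$ and $\|y_n+y_m\|=1$ must be exploited in tandem, the first (via the $W_f$-equation) to force $\lim y_n>0$ and the second to force $\lim y_n=0$, with their incompatibility resolvable only when $B$ is infinite.
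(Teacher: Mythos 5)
Your proof is correct. The implication $(2)\Rightarrow(1)$ is essentially the paper's own construction: the map $T$ there sends $x$ to $x(k)$ on the zero set of $f$ and to $-\mathrm{sgn}(f(1)f(i+1))\cdot\lim_j x(j)$ elsewhere, which is exactly your $\Phi$. The genuine divergence is in $(1)\Rightarrow(2)$. The paper argues on the dual side: it first disposes of the case $W_f$ isometric to $c$ via item (II), then takes norm-one Hahn--Banach extensions $g_n^*$ of the coordinate functionals of the embedded copy of $c$, extracts a $\sigma(\ell_1,W_f)$-convergent subsequence, shows the extensions have pairwise disjoint supports, and invokes the identification $W_f^*=\ell_1$ together with the argument from Theorem 8 of Jap\'on-Pineda and Prus to pin down the limit as $\pm\left(f(2)/f(1),f(3)/f(1),\dots\right)$; the three conclusions of (2) are then read off from that formula and from the disjointness of supports. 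You instead stay entirely inside the predual $c$, exploiting only the isometric identities $\|y_0+y_n\|=2$, $\|y_0-y_n\|=1$, $\|y_n+y_m\|=1$ and the single linear constraint $f(\cdot)=0$: the attained-supremum argument (valid for large $n$ because $\lim_j y_n(j)\to 0$) replaces the Hahn--Banach/disjoint-support machinery, and the tension between $\alpha_m>0$ (forced by $f(y_m)=0$ together with the sign pattern of $y_0$) and $\alpha_m=0$ (forced by $y_m(j_n^*)=0$) replaces the support-counting argument for the infinitude of $B$. Your route is more elementary and self-contained --- it needs neither the external duality theorem nor the case split on whether $W_f$ is isometric to $c$ --- while the paper's dual-side technique is the one that it reuses later (e.g.\ in Example \ref{example bad W-f} and Theorem \ref{W_f in a predual of X}), so each has its advantages. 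One cosmetic slip: to normalize $\lim y_0=1$ you should replace the embedding $\psi$ by $-\psi$; changing the sign of $f$ leaves $W_f$, and hence $y_0$, untouched. Nothing downstream is affected.
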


\begin{proof} (2)$\Longrightarrow$(1). Let $\left\{n\in \mathbb{N}:f(n+1)=0\right\}=\left\lbrace n_k\right\rbrace _{k=1}^{+\infty}$  and let us consider the mapping $T:c\rightarrow W_f$ defined for every
$x=(x(1),x(2),\dots)\in c$ by $T(x)=((T(x))(1),(T(x))(2),\dots)\in W_f$, where
 \begin{equation*}
    (T(x))(i)=
        \begin{cases}
            x(k) & \quad\text{if}\quad i=n_k,\\
            -\textrm{sgn}(f(1)f(i+1))\cdot \lim\limits_{j}x(j) & \quad\text{if}\quad i\in \mathbb{N}
            \setminus \left\{n_k\right\}.
        \end{cases}
  \end{equation*}
It is easy to see that $T$ is a linear isometry of $c$ into $W_f$.

(1)$\Longrightarrow$(2). If $W_f$ is isometric to $c$, then the
thesis follows immediately from the result recalled in item (II) at
the beginning of this section. Suppose that $W_f$ is not isometric
to $c$. Let $\left(e^*_n \right)_{n\geq 1}$ be the standard basis of
$\ell_1=c^*$. For every $n\geq 2$ we take a norm-one extension of
$e^*_n$ to the whole space $W_f$ and we denote it by $g^*_n$.
Consider a $\sigma(\ell_1,W_f)$-convergent subsequence
$(g_{n_{k}}^*)_{k\geq 2}$ of $(g_n^*)_{n\geq 2}$ and denote its
limit by $g_{n_1}^*$. Obviously, $g_{n_1}^*$ is a norm-one extension
of $e^*_1$ to the whole $W_f$. It is easy to see that
$\left\|g^*_{n_k}\pm g^*_{n_l}\right\|=2$ for all $k,l \in
\mathbb{N}$, $k\neq l$. Consequently,
\begin{equation}\label{disjoint support}
\mbox{supp }g^*_{n_k} \cap \mbox{supp }g^*_{n_l}=\emptyset
\end{equation}
for all $k,l \in \mathbb{N}$, $k\neq l$, where $\mbox{supp
}g^*_n:=\left\{i\in \mathbb{N}:g^*_n(i)\neq 0\right\}$. Hence, by
using the argument presented at the beginning of the proof of
Theorem 8 in \cite{Japon-Prus2004} and Theorem 4.3 in
\cite{Casini-Miglierina-Piasecki2014}, we obtain
$$g^*_{n_1}=\pm \left(\frac{f(2)}{f(1)},\frac{f(3)}{f(1)},\frac{f(4)}{f(1)},\dots\right).$$
Therefore we have that $\left|f(1)\right|= \frac12$ and $\left\{n\in
\mathbb{N}:f(n+1)=0\right\}$ is an infinite set. Since there exists
$x\in c \subset W_f$ such that $\|x\|=1$ and $e^*_n(x)=1$ for every
$n$, we get
$$e^*_{n_k}(x)=g^*_{n_k}(x)=g^*_{n_1}(x)=1$$ for every $k \geq 2$.
From the above relation and the standard duality of $W_f$ (see
(\ref{dualityW}) above) we have
\begin{equation}\label{evaluation g}
x(i)=\mbox{sgn}(g^*_{n_k}(i))
\end{equation}
for every $i\in \mbox{supp }g^*_{n_k}$ and for every $k \in
\mathbb{N}$. Taking into account (\ref{disjoint support}) and
(\ref{evaluation g}) we conclude that there exists $ i_0 $ such that
either $ x(i)=1 $ or $ x(i)=-1 $ for infinitely many $ i\geq i_0 $.
Therefore $ \{n\in \mathbb{N}:f(1)f(n+1)>0\} $ is a finite set.
\end{proof}

The last proposition of this section characterizes a class of spaces $W_f$ that enjoy the $\sigma(\ell_{1},W_{f})$-FPP.

\begin{prop} \label{fixpoint theorem}Let
$f\in \ell_1=c^*$ be such that
$\left\|f\right\|=1$, $\frac{1}{2}\leq\left|f(1)\right|<1$ and
$\left|f(j)\right|<\frac{1}{2}$ for every $j\geq2$. The space $\ell_{1}$ has the
$\sigma(\ell_{1},W_{f})$-FPP if and only if one of
the following conditions holds
\begin{enumerate}
\item $\left|f(1)\right|>\frac{1}{2}$
\item $\left|f(1)\right|=\frac{1}{2}$ and the set $N^+=\left\{ n\in\mathbb{N}:f(1) f(n+1)\leq0\right\} $
is finite.
\end{enumerate}
\end{prop}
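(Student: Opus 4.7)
The plan is to begin by identifying the $\sigma(\ell_1,W_f)$-limit of the canonical basis $(e_n^*)$ of $W_f^*=\ell_1$. For any $x=(x(1),x(2),\dots)\in W_f\subset c$, the convergence of $x$ together with $f(x)=0$ gives $\lim_n e_n^*(x)=\lim_n x(n)=-\sum_{j\ge 1}\frac{f(j+1)}{f(1)}x(j)$, so via the duality (\ref{dualityW}), $(e_n^*)$ is $\sigma(\ell_1,W_f)$-convergent to
$$g^*:=-\frac{1}{f(1)}\bigl(f(2),f(3),\dots\bigr)\in\ell_1,\qquad \|g^*\|=\frac{1-|f(1)|}{|f(1)|}.$$
Thus $\|g^*\|<1$ when $|f(1)|>1/2$, while $\|g^*\|=1$ when $|f(1)|=1/2$; this dichotomy organizes the rest of the argument.

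For the sufficiency of (1) I would proceed by contradiction along standard Karlovitz--Goebel lines: assume $C\subset\ell_1$ is $w^*$-compact convex, $T:C\to C$ nonexpansive, and $K\subset C$ a minimal $T$-invariant closed convex set without fixed point. Any approximate fixed-point sequence in $K$ has a $\sigma(\ell_1,W_f)$-cluster point that, after translating $K$ to contain $0$, lies in the convex hull of cluster points of shifted copies of $(e_n^*)$ and hence has norm at most $\|g^*\|<1$. The Goebel--Karlovitz lemma then gives $\mathrm{diam}(K)<2$, contradicting the $\ell_1$-fact that minimal invariant sets have diameter $2$; this is, in essence, the argument of Theorem~8 in \cite{Japon-Prus2004}. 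For (2), $\|g^*\|=1$, but for cofinite $n$ the quantities $f(n+1)$ and $f(1)$ share sign, so a direct computation yields $\|e_n^*-g^*\|=2$ without sign-cancellation between the position-$n$ term and the remaining tail. This coherence of signs lets an analogous Karlovitz-type scheme still deliver FPP, despite the loss of the strict inequality exploited in (1).

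For the necessity direction, assume $|f(1)|=1/2$ and $N^+$ is infinite. Since $f(n+1)\to 0$, one can extract $(n_k)\subset N^+$ with $|f(n_k+1)|\to 0$, so that $\|e_{n_k}^*\|=\|g^*\|=1$, $\|e_{n_k}^*-g^*\|\to 2$, and $\|e_{n_k}^*-e_{n_l}^*\|=2$ for $k\ne l$. Setting $C:=\overline{\mathrm{co}}^{\,\sigma(\ell_1,W_f)}\{e_{n_k}^*:k\ge 1\}$ produces a $w^*$-compact convex subset of $B_{\ell_1}$ that contains $g^*$ as a $w^*$-limit. A right-shift-type affine map $T:C\to C$, in the style of the Karlovitz construction for the predual $c$---the mass at $g^*$ being converted into $e_{n_1}^*$-mass, and each $e_{n_k}^*$-mass being shifted to $e_{n_{k+1}}^*$---is then verified to be nonexpansive (from the pairwise distance data) and fixed-point free, yielding the failure of $\sigma(\ell_1,W_f)$-FPP.

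The main obstacle I anticipate is the sufficiency of (2): when $\|g^*\|=1$, the strict inequality that drives (1) is lost, so the argument must carefully exploit the sign coherence produced by finiteness of $N^+$ to recover the strict estimates required by the Karlovitz machinery. A secondary technical point arises in the necessity step: the distances $\|e_{n_k}^*-g^*\|$ only tend to $2$, so the right-shift is only asymptotically isometric; one must pass to a sub-sub-sequence (or work within an ultrapower) to render the shift genuinely nonexpansive on the whole of $C$.
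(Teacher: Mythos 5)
Your reduction is exactly the paper's: by Theorem 4.3 of \cite{Casini-Miglierina-Piasecki2014} the standard basis is $\sigma(\ell_1,W_f)$-convergent to $e^*=-\bigl(f(2),f(3),\dots\bigr)/f(1)$, whose norm is $(1-|f(1)|)/|f(1)|$ and whose $n$-th coordinate is nonnegative exactly when $n\in N^+$, so conditions (1) and (2) translate into ``$\|e^*\|<1$, or $\|e^*\|=1$ and $\{n:e^*(n)\ge 0\}$ finite.'' At that point the paper simply invokes Theorem 8 of \cite{Japon-Prus2004}, which is stated for topologies under which the basis converges and yields all implications at once. You instead try to re-derive that theorem, and this is where a genuine gap appears: the sufficiency of (2). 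You flag it yourself as ``the main obstacle'' and offer only the observation that $\|e_n^*-e^*\|=2$ for cofinitely many $n$ plus a hope that ``sign coherence'' restores the Karlovitz estimates. That is not an argument; if anything, $\|e_n^*-e^*\|=2$ is the diametral behaviour usually associated with \emph{failure} of the FPP, so it cannot by itself be what saves the positive result. The case $\|e^*\|=1$ with $N^+$ finite is precisely the delicate part of the Jap\'on--Prus analysis (one must control the sign of the escaping mass of an approximate fixed point sequence), and as written this third of the proposition is unproved.

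The necessity direction also has a concrete defect. The affine right shift you describe on $C=\overline{\mathrm{co}}^{\,w^*}\{e^*_{n_k}\}$ --- ``$e^*$-mass to $e^*_{n_1}$, $e^*_{n_k}$-mass to $e^*_{n_{k+1}}$'' --- is not nonexpansive in general, because $e^*$ is not disjointly supported from the $e^*_{n_k}$. Writing $w_0=e^*-\sum_k e^*(n_k)e^*_{n_k}$, the two points $e^*$ and $\|w_0\|e^*_{n_1}+\sum_k e^*(n_k)e^*_{n_k}$ both lie in $C$ at distance $2\|w_0\|$, while their images are at distance $2$; so whenever some $e^*(n_k)>0$ (e.g.\ $f=(\frac12,-\frac14,\frac18,\dots)$, where every $n_k\in N^+$ has $e^*(n_k)>0$) the map expands. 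The standard repair --- used verbatim in the paper's Theorem \ref{W_f in a predual of X} --- is to take $\widetilde w=w_0/\|w_0\|$ as the extra generator of the simplex (the choice $n_k\in N^+$ guarantees $e^*(n_k)\ge 0$, so $e^*$ is a convex combination of $\widetilde w$ and the $e^*_{n_k}$) and to shift $\widetilde w\mapsto e^*_{n_1}$; disjointness of supports then gives exact nonexpansiveness, so your worry about the shift being ``only asymptotically isometric'' and needing ultrapowers is a red herring. In short: the skeleton coincides with the paper's, but you should either cite Theorem 8 of \cite{Japon-Prus2004} outright, as the paper does, or supply the missing proof of sufficiency in case (2) and correct the shift construction.
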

\begin{proof}
As recalled at the beginning of this section (see item (I) above),
we have that $W_f^*=\ell_1$. Now, Theorem 4.3 in
\cite{Casini-Miglierina-Piasecki2014} shows that $$
e_{n}^*\overset{\sigma(\ell_1,W_f)}{\longrightarrow}e^*, $$ where
$e^*=\left(-f(2)/f(1),-f(3)/f(1),\dots\right)$. The conclusion follows
immediately from Theorem 8 in \cite{Japon-Prus2004}.
\end{proof}

Since the spaces $W_f$ lacking the $w^*$-FPP play a crucial role in our study
 we introduce the following definition, suggested by Proposition \ref{fixpoint theorem} and item (II).
\begin{definition}\label{definition bad W_f}
A space $W_f$ is called "bad with respect to $w^*$-FPP" (shortly
"bad") if $f\in\ell_1$ is such that $\left\| f\right\|=1$,
$\left| f(1)\right|=\frac{1}{2}$ and the set $N^+=\left\lbrace n\in
\mathbb{N}:f(1) f(n+1) \leq 0\right\rbrace $ is infinite.
\end{definition}

We underline that, by combining Propositions \ref{c inside predual}
and \ref{fixpoint theorem}, we find an example of a $\ell_1$-predual
space $X$ such that $\ell_1$ fails the $\sigma(\ell_1,X)$-FPP but
$X$ does not contain an isometric copy of $c$.

\begin{example}\label{example bad W-f}
Let us consider the space $W_f$ where
\[
f=\left( \dfrac{1}{2}, -\dfrac{1}{4}, \dfrac{1}{8},
-\dfrac{1}{16},\dots\right)\in\ell_1.
\]We have that
\begin{itemize}
\item $W_f^*=\ell_1$;
\item $W_f$ does not contain an isometric copy of $c$ (by Proposition \ref{c inside predual});
\item $\ell_1$ lacks the $\sigma(\ell_1,W_f)$-FPP (by Proposition \ref{fixpoint theorem}).
\end{itemize}
We point out another feature of this space that will be useful in
the last section. The space $W_f$ does not have a quotient that
contains an isometric copy of $c$. Indeed, let us suppose
$c\subseteq W_f/Y$. Then, by following the reasoning from the proof
of Proposition \ref{c inside predual}, we obtain a sequence $\left(
x_n^*\right)_{n\geq1}\subset {(W_f/Y)}^*$ such that

\begin{itemize}
\item $x_{n}^{*}\overset{\sigma((W_{f}/Y)^*,W_{f}/Y)}{\longrightarrow}x_{1}^{*}$,
\item $\left\|x_n^*\right\|=1$ for every $n\in \mathbb{N}$,
\item $\left\|x_n^*\pm x_m^*\right\|=2$ for all $m,n \in
\mathbb{N}$, $m\neq n$.
\end{itemize}
Now, for each $u\in v+Y$, $v \in W_f$, we put $y^*_n(u)=x^*_n(v+Y)$.
Consequently, the sequence $\left( y_n^*\right)_{n\geq1}\subset
W_f^*$ is equivalent to the standard basis in $\ell_1$ and
$y_{n}^{*}\overset{\sigma(\ell_{1},W_{f})}{\longrightarrow}y_{1}^{*}$.
Again, by following the argument developed at the beginning of the
proof of Theorem 8 in \cite{Japon-Prus2004}, Theorem 4.3 in
\cite{Casini-Miglierina-Piasecki2014} yields
\[
y^*_1=\pm\left( \dfrac{1}{2}, -\dfrac{1}{4}, \dfrac{1}{8},
-\dfrac{1}{16},\dots\right).
\]
The last equality gives a contradiction.

\end{example}

We conclude this section by relating Proposition \ref{fixpoint theorem} to some results existing in the literature.

\begin{rem}
If we restrict our attention to $w^*$-topologies on $\ell_1$, the assumptions of Theorem 8 in \cite{Japon-Prus2004} are equivalent to those of Proposition \ref{fixpoint theorem}. Indeed, if $X$ is a predual of $\ell_1$ such that
the standard basis of $\ell_1$ is a $\sigma(\ell_1,X)$-convergent sequence, then there exists a suitable $W_f$ isometric to $X$ (see Corollary 4.4 in \cite{Casini-Miglierina-Piasecki2014}).
\end{rem}

\begin{rem}
In the case of a particular family of sets in $\ell_1$, a
characterization of the fixed point property for nonexpansive
mappings was established in \cite{Goebel-Kuczumow1978}. For example,
for every $\varepsilon \in (0,1)$ we define the set
$C_{\varepsilon}\subset \ell_1$ by
$$C_{\varepsilon}=\left\{\alpha_1
(1-\varepsilon)e_1^*+\sum_{i=2}^{\infty}\alpha_i e_i^*: \alpha_i
\geq 0, \sum_{i=1}^{\infty}\alpha_i=1\right\}.$$ The set
$C_{\varepsilon}$ is convex, bounded and closed. Moreover, it has
the FPP (see \cite{Goebel-Kuczumow1978}). Obviously
$C_{\varepsilon}$ is neither $\sigma(\ell_1,c)$-compact nor
$\sigma(\ell_1,c_0)$-compact.

Let $f=\left(\frac{1}{2-\varepsilon},-
\frac{1-\varepsilon}{2-\varepsilon},0,0,\dots \right)$, from Theorem
4.3 in \cite{Casini-Miglierina-Piasecki2014} we know that
$W_f^*=\ell_1$ and
$$e_{n}^{*}\overset{\sigma(\ell_{1},W_{f})}{\longrightarrow}(1-\varepsilon)e_{1}^{*}.$$
Hence, Corollary 2 in \cite{Japon-Prus2004} implies that
$C_{\varepsilon}$ is $\sigma(\ell_1,W_f)$-compact. By Proposition
\ref{fixpoint theorem}, $\ell_1$ has the $\sigma(\ell_1,W_f)$-FPP.
\end{rem}

\section{Sufficient conditions for the lack of weak$^*$ fixed point property in the dual of a separable Banach space}\label{Section separable}
This section is devoted to find some sufficient conditions for the
lack of $\sigma(X^*,X)$-FPP where $X$ is a separable space. The
first step is suggested by the well-known example of $X=c$. Indeed,
we start by showing that the presence in $X$ of a copy of $c$
implies the failure of $\sigma(X^*,X)$-FPP. In order to prove this
theorem we use an auxiliary result about the existence of a
$1$-complemented copy of $c$.

\begin{prop} \label{c in separable X}
Let $X$ be a separable Banach space that contains an isometric copy
of $c$. Then there is a subspace $Y$ of $X$ such that $Y$ is
isometric to $c$ and $1$-complemented in $X$.
\end{prop}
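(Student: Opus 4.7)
The plan is to produce a new isometric copy of $c$ inside $X$, adapted to a suitable subsequence of coordinate-functional extensions, and then exhibit an explicit norm-one projection onto it.

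Let $i\colon c\to X$ be the given isometric embedding, write $e_0=(1,1,\dots)$, let $(e_n)_{n\geq 1}$ be the standard unit vectors in $c$, and set $y_n=i(e_n)$ for $n\geq 0$. For each $n\geq 1$, the coordinate functional $e_n^{*}\in c^{*}$ has norm one, so by the Hahn--Banach theorem it extends to some $f_n\in X^{*}$ with $\|f_n\|=1$; by construction $f_n(y_m)=\delta_{nm}$ for $m\geq 1$ and $f_n(y_0)=1$. Since $X$ is separable, the closed unit ball of $X^{*}$ is $w^{*}$-metrizable, so some subsequence $(f_{n_k})$ is $w^{*}$-convergent to a functional $f_\infty\in X^{*}$ with $\|f_\infty\|\leq 1$. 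Evaluating on $y_0$ yields $f_\infty(y_0)=1$, hence $\|f_\infty\|=1$, and evaluating on $y_{n_j}$ gives $f_\infty(y_{n_j})=0$ for every $j$.

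Let $Y=\overline{\mathrm{span}}\{y_0,y_{n_1},y_{n_2},\dots\}$, and define $J\colon c\to X$ by
\[
J(x)=x(0)\,y_0+\sum_{k=1}^\infty\bigl(x(k)-x(0)\bigr)\,y_{n_k},\qquad x(0)=\lim_k x(k).
\]
Then $J(x)=i(z)$, where $z\in c$ equals $x(k)$ at position $n_k$ and $x(0)$ elsewhere; a direct check shows that $J$ is a linear isometry with range $Y$, so $Y$ is isometric to $c$. Now define $P\colon X\to Y$ by
\[
P(x)=f_\infty(x)\,y_0+\sum_{k=1}^\infty\bigl(f_{n_k}(x)-f_\infty(x)\bigr)\,y_{n_k},
\]
the series converging because $f_{n_k}(x)\to f_\infty(x)$. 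Via the isometry $J$, the image $P(x)$ corresponds to the sequence $(f_{n_k}(x))_{k\geq 1}\in c$, whose $c$-norm equals $\sup_k |f_{n_k}(x)|\leq \|x\|$; hence $\|P\|\leq 1$. Using $f_{n_k}(y_0)=1$ and $f_{n_k}(y_{n_j})=\delta_{kj}$, a direct computation gives $P|_Y=\mathrm{id}_Y$.

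The only subtle point is the transition from the original copy $i(c)$ to the new subspace $Y$: one must verify that $Y$ is still isometric to $c$ (which is immediate from the definition of $J$) and then read off the norm of $P$ through this identification. Everything else reduces to a standard Hahn--Banach extension plus a $w^{*}$-metrizability extraction.
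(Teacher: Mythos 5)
Your proof is correct and follows essentially the same route as the paper: extend the coordinate functionals by Hahn--Banach, extract a $w^{*}$-convergent subsequence using separability, and define the projection by the same formula $P(x)=f_\infty(x)y_0+\sum_k\bigl(f_{n_k}(x)-f_\infty(x)\bigr)y_{n_k}$. The only difference is cosmetic (you work in the image $i(c)\subset X$ rather than inside $c$ itself, and you spell out the norm estimate and the identity $P|_Y=\mathrm{id}_Y$ in more detail than the paper does).
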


\begin{proof}
Let $\left(e_n^*\right)_{n\geq1}$ be the standard basis of
$c^*=\ell_1$. For each $n\in \mathbb{N}$, we consider a norm
preserving extension of $e_n^*$ to the whole $X$, we denote it by
$x_n^*$. Then, there exists a subsequence $\left(x_{n_j}^*\right)$
such that $n_1 >1$ and

\[
x_{n_j}^*\overset{\sigma(X^*,X)}{\longrightarrow}\overline{x}^*.
\]
Let us consider the subspace
\[
Y=\left\{y\in c: \lim y(n)=y(0)=y(s) \textrm{ for each } s \in \mathbb{N}
\setminus \left\{n_j-1\right\} \right\}
\]
and the mapping $P:X\rightarrow Y$ defined by

\[
P(x)=\overline{x}^*(x)e_0+\sum_{j=1}^{\infty}\left(x_{n_{j}}^*-\overline{x}^*\right)(x)e_{n_j-1},
\]
where $e_0=(1,1,\dots,1,\dots)$. It is easy to see that $Y$ is
isometric to $c$ and $P$ is a norm-one projection onto $Y$.
\end{proof}

\begin{thm} \label{thm c inside predual}
Let $X$ be a separable Banach space that contains a subspace
isometric to $c$. Then $X^*$ fails $\sigma(X^*,X)$-FPP.
\end{thm}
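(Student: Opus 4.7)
The plan is to transfer the classical failure of $\sigma(\ell_{1},c)$-FPP in $\ell_{1}=c^{*}$ (recalled in the Introduction via Karlovitz \cite{Karlovitz1976}) up to $X^{*}$ through the adjoint of the norm-one projection furnished by Proposition \ref{c in separable X}.

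First I would apply Proposition \ref{c in separable X} to obtain a subspace $Y$ of $X$ that is isometric to $c$ together with a norm-one projection $P:X\to Y$. Writing $J:Y\hookrightarrow X$ for the isometric inclusion, one has $P\circ J=\mathrm{id}_{Y}$, so the adjoint $P^{*}:Y^{*}\to X^{*}$ satisfies $J^{*}\circ P^{*}=\mathrm{id}_{Y^{*}}$ and is therefore an injective linear isometric embedding. A direct pairing argument, $(P^{*}y_{\alpha}^{*})(x)=y_{\alpha}^{*}(Px)\to y^{*}(Px)=(P^{*}y^{*})(x)$ for every $x\in X$, shows at once that $P^{*}$ is $\sigma(Y^{*},Y)$-to-$\sigma(X^{*},X)$ continuous.

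Next I would take, from the Karlovitz/Alspach-type counterexample in $(\ell_{1},\sigma(\ell_{1},c))$, a nonempty convex $\sigma(Y^{*},Y)$-compact set $C\subseteq Y^{*}$ together with a nonexpansive, fixed-point-free self-map $T:C\to C$. Set $C':=P^{*}(C)\subseteq X^{*}$; this is convex, and $\sigma(X^{*},X)$-compact by the continuity just recorded. Define $T':C'\to C'$ by $T':=P^{*}\circ T\circ (P^{*}|_{C})^{-1}$. Since $P^{*}$ is an into-isometry, $T'$ is nonexpansive, and any fixed point of $T'$ would pull back via $(P^{*}|_{C})^{-1}$ to a fixed point of $T$, a contradiction. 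Therefore $X^{*}$ fails $\sigma(X^{*},X)$-FPP.

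The only step really demanding attention is the weak$^{*}$-to-weak$^{*}$ continuity of $P^{*}$, which is precisely what allows $\sigma(X^{*},X)$-compactness of $C'$ to be read off from $\sigma(Y^{*},Y)$-compactness of $C$; the remainder is the mechanical observation that an isometric linear embedding transports a nonexpansive self-map of a subset to a nonexpansive self-map of its image.
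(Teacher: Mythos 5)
Your proposal is correct and follows essentially the same route as the paper: invoke Proposition \ref{c in separable X} to get a $1$-complemented isometric copy of $c$, observe that the adjoint $P^*$ of the norm-one projection is a $w^*$-continuous isometric embedding of $c^*=\ell_1$ into $X^*$, and push the Karlovitz/Alspach-type $w^*$-compact convex set with its fixed-point-free nonexpansive map forward to $P^*(C)$. The extra details you supply (the identity $J^*\circ P^*=\mathrm{id}_{Y^*}$ and the explicit conjugation $T'=P^*\circ T\circ(P^*|_C)^{-1}$) are exactly what the paper leaves implicit.
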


\begin{proof}
By Proposition \ref{c in separable X} we may assume that $c$ is
$1$-complemented in $X$. So, there is a projection $P$ of $X$ onto
$c$ with $\left\|P\right\|=1$. Then, $P^*:c^*\rightarrow X^*$ is a
$w^*$-continuous isometry. Since $c^*$ fails to have the $w^*$-FPP, there exists a $w^*$-compact convex set $C$ that lacks the FPP. Therefore $P^*(C)$ is a convex,
$w^*$-compact set in $X^*$ which lacks the FPP.
\end{proof}

\begin{rem}\label{lack of w^*-FPP for contractive maps}
It is easy to find  a $w^*$-compact and convex set $C\subset c^*$
which fails the FPP for isometry. Moreover, Lennard (see Ex.
3.2-3.3, pp. 41-43 in \cite{Handbook}) found an example of a convex,
$w^*$-compact set $C\subset c^*$ that fails the FPP for affine (as
well as for non affine) contractive mappings (i.e., the mappings
$T:C\rightarrow C$ such that $\|T(x)-T(y)\|<\|x-y\|$ for all $x,y\in
C, x\neq y$). Therefore, under the same assumptions of the previous
theorem,  $X^*$ fails $\sigma (X^*,X)$-FPP for isometries and affine
contractive mappings.
\end{rem}

A consequence of Theorem \ref{thm c inside predual} shows that all the separable Lindenstrauss spaces with a nonseparable dual fail to have the $w^*$-FPP.

\begin{cor}\label{cor-nonseparable Lspace}
Let $X$ be a separable Lindenstrauss space such that $X^*$ is a non-separable space. Then $X^*$ lacks the $\sigma(X^*,X)$-FPP.
\end{cor}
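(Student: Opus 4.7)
The plan is to reduce to Theorem~\ref{thm c inside predual}: it suffices to show that every separable Lindenstrauss space $X$ whose dual $X^{*}$ is non-separable must contain an isometric copy of $c$. Once such a copy is produced, Theorem~\ref{thm c inside predual} immediately yields that $X^{*}$ fails the $\sigma(X^{*},X)$-FPP.

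To produce the copy of $c$, I would argue as follows. Since $X^{*}=L_{1}(\mu)$ is non-separable and $X$ is separable (so that $B_{X^{*}}$ is weak$^{*}$-metrizable and weak$^{*}$-compact), extract from the normalized atoms of $L_{1}(\mu)$, or, in their absence, from a disjoint sequence produced by a Maharam-type decomposition of the non-separable part, a sequence $(f_{n}^{*})_{n\geq 1}$ of pairwise disjointly supported unit vectors in $X^{*}$ which weak$^{*}$-converges to some $f^{*}\in B_{X^{*}}$ with $f^{*}\neq 0$. Write $f_{0}^{*}:=f^{*}$. Then, mimicking the formula in the proof of Proposition~\ref{c in separable X}, the mapping
\[
P(x)\;=\;f^{*}(x)e_{0}+\sum_{n=1}^{\infty}\bigl(f_{n}^{*}(x)-f^{*}(x)\bigr)e_{n}
\]
should be a norm-one projection of $X$ onto an isometric copy of $c$: disjointness of the $L_{1}$-supports controls $\|P(x)\|_{c}\leq \|x\|$, and Hahn-Banach extensions of the dual basis of $c$ (applied as in Proposition~\ref{c in separable X}) provide the inverse construction.

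I expect the main obstacle to lie in the first step, namely in ensuring that the weak$^{*}$-cluster point $f^{*}$ of the disjoint sequence $(f_{n}^{*})$ is non-zero. Indeed, the example of $X=c_{0}$ (with $X^{*}=\ell_{1}$ separable) shows that, without a suitable hypothesis, the weak$^{*}$-limit of a normalized disjoint sequence in the dual can be zero, producing only a copy of $c_{0}$ rather than $c$. The role of the non-separability assumption on $X^{*}$ is precisely to exclude this degenerate case: by a structural result from separable Lindenstrauss space theory (ultimately rooted in the Lazar-Lindenstrauss representation of $X$ as the space of affine continuous functions on a metrizable Choquet simplex), non-separability of $X^{*}$ forces accumulation of normalized atoms of $L_{1}(\mu)$ at a non-zero element of $B_{X^{*}}$, yielding the desired non-trivial $f^{*}$ and hence the isometric embedding of $c$ into $X$.
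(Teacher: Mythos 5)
Your reduction to Theorem~\ref{thm c inside predual} is exactly the paper's strategy: both proofs come down to exhibiting an isometric copy of $c$ inside $X$. The paper does this in one line by quoting Theorem 2.3 of Lazar--Lindenstrauss \cite{Lazar-Lindenstrauss1971}: a separable Lindenstrauss space with non-separable dual contains an isometric copy of $\mathcal{C}(\Delta)$, $\Delta$ the Cantor set, and $\mathcal{C}(\Delta)$ visibly contains $c$. Your proposed replacement for that citation is where the argument breaks down.

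The gap is the step from ``a pairwise disjointly supported normalized sequence $(f_n^*)$ in $L_1(\mu)$ with non-zero weak$^*$ cluster point $f^*$'' to ``an isometric copy of $c$ in $X$.'' This implication is false as stated, and the paper itself supplies the counterexample: in Example~\ref{example bad W-f}, $X=W_f$ with $f=(\tfrac12,-\tfrac14,\tfrac18,\dots)$ has $X^*=\ell_1=L_1(\text{counting measure})$, the normalized atoms $e_n^*$ are pairwise disjointly supported unit vectors converging weak$^*$ to the norm-one element $e^*=(\tfrac12,-\tfrac14,\dots)\neq 0$, and yet $W_f$ contains no isometric copy of $c$ (Proposition~\ref{c inside predual}). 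The obstruction is visible in your projection formula: for its range to be an isometric copy of $c$ you need, at a minimum, a norm-one $x\in X$ with $f_n^*(x)=1$ for all $n$ (the preimage of the constant sequence $e_0$), and more generally elements realizing arbitrary convergent sign patterns isometrically; disjointness of supports plus a non-zero weak$^*$ limit gives neither. (Compare Proposition~\ref{c inside predual}, where extra sign and support conditions on $f$ are exactly what is needed.) So the ``structural result'' you appeal to would have to deliver much more than accumulation of atoms at a non-zero point, and at that level of strength it is essentially the Lazar--Lindenstrauss theorem itself. The correct fix is simply to invoke that theorem; your construction, as proposed, cannot be completed.
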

\begin{proof}
Theorem 2.3 in \cite{Lazar-Lindenstrauss1971} proves that a separable Lindenstrauss space $X$ with nonseparable dual contains a subspace isometric to the space $\mathcal{C}(\Delta)$ where $\Delta$ is the Cantor set. Since $\mathcal{C}(\Delta)$ contains an isometric copy of $c$, the thesis follows directly from Theorem \ref{thm c inside predual}.
\end{proof}

A simple extension of Theorem \ref{thm c inside predual} can be easily obtained by considering a quotient of $X$ instead of a subspace.

\begin{rem}\label{remark quotient c}
Let $X$ be a separable Banach space and let us suppose that there exists a quotient $X/Y$ of $X$ isometric to $c$. Theorem \ref{thm c inside predual} shows that $Y^\bot$ fails the $w^*$-FPP and it follows easily that also $X^*$ fails the $w^*$-FPP.
\end{rem}
 The following example shows that to consider a quotient of $X$ is a true extension of Theorem \ref{thm c inside predual}.
 \begin{example}\label{example not c not quotient}
 Let us consider the space $W_f$ where
 $$ f=\left(-\frac{1}{2},\frac{1}{4},0,-\frac{1}{8},0,\frac{1}{16},0,\dots\right)\in \ell_1. $$
 We have that
 \begin{itemize}
 \item $W_f^*=\ell_1$;
 \item $W_f$ does not contain an isometric copy of $c$ (by Proposition \ref{c inside predual});
 \item $\ell_1$ lacks the $\sigma(\ell_1,W_f)$-FPP (by Proposition \ref{fixpoint theorem}).
 \end{itemize}
 Moreover, there exists a quotient of $W_f$ isometric to $c$. Indeed, let us consider the subspace 
 $$Y=\left\lbrace y \in W_f: y(2k)=0 \mbox{ for all } k\in \mathbb{N} \right\rbrace$$ and the map $T:c\longrightarrow W_f/Y$ defined by 
 $$ T(x)=\left( \frac{7}{3}x(0),x(1),x(0),x(2),x(0),\dots\right)+Y$$
 for every $x\in c$. The map $T$ is easily seen to be a surjective isometry.
 \end{example}

It is easy to observe that the lack of $w^*$-FPP does not imply that $c\subset X$ when $X$ is a generic separable Banach space. Indeed, the well-known example by Alspach \cite{A1981}
shows that $\ell_{\infty}$ fails the $\sigma (\ell_{\infty},\ell_1)$-FPP, whereas its only predual
does not contain an isometric copy of $c$.
Moreover, Example \ref{example bad W-f} shows that also a Lindenstrauss space exhibits the same behaviour. The same example proves that also the lack of a quotient of $X$ containing an isometric copy of $c$ is not a necessary condition for the lack of $w^*$-FPP.   

The next result extends Theorem \ref{thm c inside predual}.
Indeed, the space $c$ can be regarded as a special member of the
family of "bad" $W_f$ by taking $f=\left(
\frac{1}{2},\frac{1}{2},0,0,\dots\right)$ (see Section \ref{section
W_f}).

\begin{thm}\label{W_f in a predual of X}
Let $X$ be a separable Banach space. If $X$ contains a subspace isometric to a "bad" $W_f$, then $X^*$ fails the $\sigma(X^*,X)$-FPP.
\end{thm}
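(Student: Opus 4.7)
The plan is to follow the template of Theorem~\ref{thm c inside predual}, whose proof first located a $1$-complemented isometric copy of $c$ inside $X$ (Proposition~\ref{c in separable X}) and then transferred a $w^*$-compact convex set without the FPP from $c^*$ to $X^*$ via the $w^*$-continuous adjoint projection. Here I would analogously build a $w^*$-continuous isometric embedding $\ell_1 = W_{\tilde f}^* \hookrightarrow X^*$ where $W_{\tilde f}$ is some (possibly different) ``bad'' hyperplane of $c$. Given such an embedding, a $\sigma(\ell_1,W_{\tilde f})$-compact convex FPP-failing subset of $\ell_1$ (available by Proposition~\ref{fixpoint theorem} applied to the ``bad'' $W_{\tilde f}$) transports to a $\sigma(X^*,X)$-compact convex FPP-failing subset of $X^*$, proving the theorem.

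Let $\iota : W_f \hookrightarrow X$ denote the given isometric embedding and $(e_n^*)_{n\geq 1}$ the standard basis of $\ell_1 = W_f^*$. For each $n$ pick a norm-one Hahn--Banach extension $u_n^* \in X^*$ of $e_n^* \circ \iota^{-1}$. Since $X$ is separable the ball $B_{X^*}$ is $w^*$-metrizable, so we may extract a $w^*$-convergent subsequence $u_{n_j}^* \to u_\infty^*$. Normalise $f(1) = 1/2$. By a diagonal argument, refine the subsequence $(n_j)$ so that it exhausts the support $M := \{n \geq 1 : f(n+1) \neq 0\}$ of $f$ beyond the first coordinate, while still ensuring that $\{j : f(n_j+1)\leq 0\}$ is infinite; this refinement is always available because $W_f$ is ``bad'' (if the positive entries of $f$ dominate the non-positive ones we adjoin an infinite subset of $\{n : f(n+1)=0\}$ to $(n_j)$ to restore the infinite count). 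Then $\tilde f := (f(1), f(n_1+1), f(n_2+1), \dots)\in \ell_1$ satisfies $\|\tilde f\|_1 = 1$, $|\tilde f(1)| = 1/2$, and the associated hyperplane $W_{\tilde f}$ is again ``bad''. The crucial ingredient is the algebraically modified ``limit'' functional
\[
\tilde u_\infty^* := -\frac{1}{f(1)} \sum_{j\geq 1} f(n_j+1)\, u_{n_j}^* \in X^*,
\]
which, because $(n_j)$ exhausts $M$, is a norm-one Hahn--Banach extension of the $\sigma(\ell_1,W_f)$-limit $e^* = (-f(2)/f(1), -f(3)/f(1), \dots) \in W_f^*$, and by its very definition satisfies the identity
\[
f(1)\, \tilde u_\infty^* + \sum_{j\geq 1} f(n_j+1)\, u_{n_j}^* = 0 \qquad \text{on all of } X.
\]
Using $\tilde u_\infty^*$ as the ``limit coordinate'' and the $u_{n_j}^*$'s as the remaining coordinate functionals---in complete analogy with the formula for $P$ in the proof of Proposition~\ref{c in separable X}---one defines a projection $P : X \to Z$, with $Z$ an isometric copy of $W_{\tilde f}$ sitting inside $\iota(W_f) \subset X$. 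The identity above is precisely what ensures that $P(x)$ respects the hyperplane equation $f(\cdot)=0$ and thus lands in $\iota(W_f)$ rather than merely in $\iota(c)$.

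The principal technical obstacle lies in this last step. The direct imitation of Proposition~\ref{c in separable X}, taking the $w^*$-limit $u_\infty^*$ itself as the ``limit coordinate'', produces output inside $\iota(c)$ but typically escaping the hyperplane $\iota(W_f)$, so one must trade $u_\infty^*$ for the algebraically tailored $\tilde u_\infty^*$. One then has to verify, simultaneously, (i) that $\tilde u_\infty^*$ has norm one, which rests on the normalisation $|f(1)|=1/2$ and on $(n_j) \supseteq M$; (ii) that the auxiliary hyperplane $W_{\tilde f}$ is itself ``bad'', which is the delicate outcome of the subsequence extraction described above; and (iii) that the resulting $P$ is a genuine norm-one projection onto the claimed copy of $W_{\tilde f}$. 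Once $P$ is in hand, the adjoint $P^* : W_{\tilde f}^* \to X^*$ is the desired $w^*$-continuous isometric embedding, and the transfer of the fixed-point-free map from $\ell_1 = W_{\tilde f}^*$ to $X^*$ is routine, exactly as in the concluding step of Theorem~\ref{thm c inside predual}.
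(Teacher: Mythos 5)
Your overall strategy is genuinely different from the paper's, and it has a real gap at exactly the point you flag as ``the principal technical obstacle'': the norm-one projection $P$ onto a copy of a ``bad'' $W_{\tilde f}$ is never constructed, and the natural imitation of Proposition~\ref{c in separable X} breaks down. The obstruction is concrete. If $P(x)$ is to be a sequence whose coordinates at the selected positions $n_j$ are $u_{n_j}^*(x)$ and whose remaining coordinates carry a common ``limit-like'' value, then convergence of the sequence $P(x)$ forces that value to be $u_\infty^*(x)=\lim_j u_{n_j}^*(x)$ (the $w^*$-limit), while the hyperplane condition $f(P(x))=0$ forces it to be $\tilde u_\infty^*(x)=-\frac{1}{f(1)}\sum_j f(n_j+1)u_{n_j}^*(x)$. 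These two functionals agree on $\iota(W_f)$ (both restrict to $e^*$) but there is no reason for them to agree on all of $X$, since the $u_n^*$ are arbitrary Hahn--Banach extensions; so for a general $x\in X$ the candidate $P(x)$ either fails to lie in $c$ or fails to lie in the hyperplane. Your identity $f(1)\tilde u_\infty^*+\sum_j f(n_j+1)u_{n_j}^*=0$ is true by definition but does not resolve this clash. There are also smaller problems: a subsequence of a $w^*$-convergent subsequence cannot be ``refined'' so as to \emph{exhaust} the support $M$ (refining only removes indices), and adjoining indices to restore badness may destroy $w^*$-convergence. More fundamentally, nothing in the paper (or in your argument) shows that an isometric copy of a bad $W_f$ in a separable space must contain a $1$-complemented bad $W_{\tilde f}$; indeed Theorem~\ref{OGT} only produces a bad $W_f$ as a \emph{quotient} of $X$, and the closing open problem of the paper suggests the subspace-level structure is delicate.

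The paper avoids projections entirely and argues directly, following the last part of the proof of Theorem 8 in \cite{Japon-Prus2004}. It takes norm-one extensions $x_n^*$ of the coordinate functionals $e_n^*$, chooses an increasing sequence $(n_j)\subset N^+$ with $x_{n_j}^*\overset{w^*}{\to}x^*$ and $w_0=e^*-\sum_j e^*(n_j)e_{n_j}^*\neq 0$, sets $\widetilde w=\bigl(x^*-\sum_j e^*(n_j)x_{n_j}^*\bigr)/\|w_0\|$, and shows that the $w^*$-compact convex set $C$ generated by $x^*$, $\widetilde w$ and the $x_{n_j}^*$ admits a fixed-point-free nonexpansive shift. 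The only estimate needed is
\[
\Bigl\|\alpha_1\widetilde w+\sum_{j}\alpha_{j+1}x_{n_j}^*\Bigr\|\;\geq\;\Bigl\|\alpha_1\tfrac{w_0}{\|w_0\|}+\sum_{j}\alpha_{j+1}e_{n_j}^*\Bigr\|\;=\;\sum_j|\alpha_j|,
\]
obtained by restricting to $\iota(W_f)$ and using that $w_0$ is supported off $\{n_j\}$. This needs only that the finitely many functionals involved restrict correctly on the subspace, not that they assemble into a projection or a $w^*$-continuous embedding of all of $\ell_1$. If you want to salvage your structural route, you should aim for the weaker (and sufficient) goal of a $w^*$-continuous isometric embedding of $W_{\tilde f}^*=\ell_1$ into $X^*$, i.e.\ a quotient of $X$ isometric to a bad $W_{\tilde f}$, rather than a norm-one projection; but even that requires an argument (in Theorem~\ref{OGT} it is done via Alspach's lemmas under the extra hypothesis $X^*=\ell_1$) that your proposal does not supply in the general separable setting.
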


\begin{proof}
Let $x\in W_{f}$ and $\left( e_{n}^{*}\right) $
be a sequence of elements of $W_{f}^{*}$ defined by
\[
e_{n}^{*}(x)=x(n)
\]
for every $n\in\mathbb{N}$. From Theorem 4.3 in
\cite{Casini-Miglierina-Piasecki2014} we have that
\[
e_{n}^{*}\overset{\sigma(\ell_{1},W_{f})}{\longrightarrow}e^{*}
\]
where
$e^{*}=\left(-\frac{f(2)}{f(1)},-\frac{f(3)}{f(1)},-\frac{f(4)}{f(1)},\dots\right)$
(observe that the same relation holds when
$\left|f(j)\right|=\frac12$ for some $j\geq 2$). We denote by
$x_{n}^{*}$ the equal norm extensions to the whole space $X$ of the
functionals $e_{n}^{*}$. By the assumption about $W_{f}$ we have
that the set $N^{+}=\left\{
n\in\mbox{\ensuremath{\mathbb{N}}}:f(1)f(n+1)\leq0\right\} $ has
infinitely many elements. Therefore we can choose an increasing
sequence $\left( n_{j}\right) \subset N^{+}$ such that
\[
x_{n_{j}}^{*}\overset{\sigma(X^{*},X)}{\longrightarrow}x^{*}
\]
and $w_{0}=e^{*}-u_{0}\neq0$ where
$u_{0}=\sum_{j=1}^{+\infty}e^{*}(n_{j})e_{n_{j}}^{*}$. Now we
consider the extension of $u_{0}$ to the whole space $X$ defined by
$\widetilde{{u}_{0}}=\sum_{j=1}^{+\infty}e^{*}(n_{j})x_{n_{j}}^{*}$
and the elements $\widetilde{{w}_{0}}=x^{*}-\widetilde{{u}_{0}}$ and
$\widetilde{w}=\frac{\widetilde{{w}_{0}}}{\left\Vert
w_{0}\right\Vert }$. Now, by adapting to our framework the approach
developed in the last part of the proof of Theorem 8 in
\cite{Japon-Prus2004}, we show that the $w^{*}$-compact, convex set
\[
C=\left\{
\mu_{1}x^{*}+\mu_{2}\widetilde{w}+\sum_{j=1}^{+\infty}\mu_{j+2}x_{n_{j}}^{*}:\,\sum_{k=1}^{+\infty}\mu_{k}=1,\,\mu_{k}\geq0,\,
k=1,2,\dots\right\}
\]
can be rewritten as
\[
C=\left\{
\lambda_{1}\widetilde{w}+\sum_{j=1}^{+\infty}\lambda_{j+1}x_{n_{j}}^{*}:\,\sum_{k=1}^{+\infty}\lambda_{k}=1,\,\lambda_{k}\geq0,\,
k=1,2,\dots\right\} .
\]
Now, we consider the map $T:C\rightarrow C$ defined by:
\[
T\left(\lambda_{1}\widetilde{w}+\sum_{j=1}^{+\infty}\lambda_{j+1}x_{n_{j}}^{*}\right)=\sum_{j=1}^{+\infty}\lambda_{j}x_{n_{j}}^{*}.
\]
Since
$x=\lambda_{1}\widetilde{w}+\sum_{j=1}^{+\infty}\lambda_{j+1}x_{n_{j}}^{*}\in
C$ has a unique representation the map $T$ is well defined. Moreover
it is a nonexpansive map. Indeed, for every
$\alpha_{j}\in\mathbb{R}$, $j=1,2,\dots$ it holds
\[
\left\Vert
\alpha_{1}\widetilde{w}+\sum_{j=1}^{+\infty}\alpha_{j+1}x_{n_{j}}^{*}\right\Vert
\geq\left\Vert \alpha_{1}\frac{w_{0}}{\left\Vert w_{0}\right\Vert
}+\sum_{j=1}^{+\infty}\alpha_{j+1}e_{n_{j}}^{*}\right\Vert
=\sum_{j=1}^{+\infty}\left|\alpha_{j}\right|
\]
\[
=\sum_{j=1}^{+\infty}\left|\alpha_{j}\right|\left\Vert
x_{n_{j}}^{*}\right\Vert \geq\left\Vert
\sum_{j=1}^{+\infty}\alpha_{j}x_{n_{j}}^{*}\right\Vert .
\]
Finally, it is easy to see that $T$ has not a fixed point in
$C$.
\end{proof}

As already pointed out in Remark \ref{remark quotient c} for Theorem \ref{thm c inside predual}, we can extend also Theorem \ref{W_f in a predual of X} by assuming a property of the quotients of $X$.

\begin{rem}\label{W_f + quotient}
Let $X$ be a separable Banach space and let us suppose that a "bad"
$W_f$ is a subspace of a quotient $X/Y$ of $X$. Theorem \ref{W_f in
a predual of X} shows that $Y^{\bot}$ fails the $w^{*}$-FPP. It is
straightforward to see that also $X^{*}$ fails the $w^{*}$-FPP.
\end{rem}

In the next section we will see that the property stated in previous remark becomes a necessary condition if we additionaly assume that X is a separable Lindenstrauss space. Indeed, we will prove that  the lack of $w^*$-FPP implies the existence of a quotient of $X$  containing a "bad" $W_f$.

\section{The case of separable Lindenstrauss spaces}\label{Section OGT}
This section is devoted to the main result of our paper. We provide
a characterization of separable Lindenstrauss spaces $X$ that induce
a $w^*$-topology such that $\sigma(X^*,X)$-FPP fails.

By taking in account Corollary \ref{cor-nonseparable Lspace} we can  limit ourselves to study the Lindenstrauss spaces
whose dual is isometric to $\ell_1$.

It is worth pointing out that the sufficient condition
for the failure of $\sigma (X^*,X)$-FPP stated in Remark \ref{W_f +
quotient}, reveals to be also necessary. This fact emphasizes the
crucial role played in the study of $w^*$-FPP by the spaces "bad"
$W_f$. Moreover, we are able to find also a condition involving the
limit of a $w^*$-convergent subsequence of the standard basis of
$\ell_1$ that is equivalent to the failure of $\sigma
(\ell_1,X)$-FPP. This property allows us to give a characterization of
the $w^*$-FPP in $\ell_1$ by removing the restrictive assumption
about the convergence of the standard basis of $\ell_1$ used in
Theorem 8 in \cite{Japon-Prus2004}.
\begin{thm}\label{OGT}
Let $X$ be a predual of $\ell_1$. Then the following are equivalent.

\begin{enumerate}
\item \label{item noFPP} $\ell_1$ lacks the $\sigma(\ell_1,X)$-FPP for nonexpansive mappings.

\item \label{item isometry}$\ell_1$ lacks the $\sigma(\ell_1,X)$-FPP for isometries.

\item \label{item contractive}$\ell_1$ lacks the $\sigma(\ell_1,X)$-FPP for contractive mappings.

\item \label{item subsequence}There is a subsequence $(e_{n_k}^*)_{k \in
\mathbb{N}}$ of the standard basis $(e_{n}^*)_{n \in \mathbb{N}}$ in
$\ell_1$ which is $\sigma(\ell_1,X)$-convergent to a norm-one
element $e^* \in \ell _1$ with $e^*(n_k)\geq 0$ for all $k\in
\mathbb{N}$.

\item\label{item quotient isometric} There is a quotient of $X$ isometric to a ``bad'' $W_f$.

\item \label{item quotient contained}There is a quotient of $X$ that contains a subspace isometric to a ``bad'' $W_g$.

\end{enumerate}
\end{thm}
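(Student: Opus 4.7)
The plan is to establish the cycle $(1) \Rightarrow (4) \Rightarrow (5) \Rightarrow (6) \Rightarrow (1)$ together with the side implications $(6) \Rightarrow (2), (3)$. The arrows $(2) \Rightarrow (1)$, $(3) \Rightarrow (1)$ and $(5) \Rightarrow (6)$ are free, the second by taking the whole quotient $X/Y$ as its own subspace. For $(6) \Rightarrow (1)$ I combine Theorem \ref{W_f in a predual of X} with Remark \ref{W_f + quotient} applied to the bad $W_g$ inside the quotient. For $(6) \Rightarrow (2)$ I re-inspect the nonexpansive map $T$ produced in the proof of Theorem \ref{W_f in a predual of X}: on the simplex-type $w^*$-compact convex set $C$ there, both inequalities in the chain used to bound $\|T u - Tv\|$ collapse to equalities, so $T$ is actually an isometry. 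For $(6) \Rightarrow (3)$ I transfer Lennard's contractive example recalled in Remark \ref{lack of w^*-FPP for contractive maps} through the $w^*$-continuous isometric embedding $Y^{\perp} \hookrightarrow X^*$ supplied by the quotient map.

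For $(4) \Rightarrow (5)$, given $e^*_{n_k} \xrightarrow{w^*} e^*$ with $\|e^*\| = 1$ and $e^*(n_k) \geq 0$, I would set $Y := \bigcap_{k} \ker e^*_{n_k}$ and show that $X/Y$ is isometric to a bad $W_f$. The main observation is that $Y^{\perp} = (X/Y)^*$ is the $\ell_1$-subspace of $X^*$ supported on the countable set $\{n_k\} \cup \mathrm{supp}(e^*)$: the inclusion $\supseteq$ uses that $e^* \in Y^{\perp}$ via $w^*$-convergence, and the reverse inclusion is a standard $w^*$-closure computation. Re-indexing this $\ell_1$ so that its standard basis is $\{e^*_j : j \in \{n_k\} \cup \mathrm{supp}(e^*)\}$, one checks that this re-indexed basis is $\sigma(\ell_1, X/Y)$-convergent to $e^*$. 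Corollary 4.4 of \cite{Casini-Miglierina-Piasecki2014} then identifies $X/Y$ with some $W_f$, and the norm-one limit together with the sign hypothesis $e^*(n_k) \geq 0$ translate exactly into $|f(1)| = 1/2$ and $N^+$ infinite, i.e.\ $W_f$ bad.

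The heart of the theorem is $(1) \Rightarrow (4)$. Starting from a $\sigma(\ell_1, X)$-compact convex $C \subset \ell_1$ and a nonexpansive fixed-point-free $T : C \to C$, a Zorn argument produces a minimal $T$-invariant $w^*$-compact convex $K \subseteq C$, and the Goebel--Karlovitz lemma yields an approximate fixed point sequence $(y_n) \subset K$ with $\lim_n \|y_n - z\| = \mathrm{diam}(K)$ for every $z \in K$. Passing to a $w^*$-accumulation point $y_\infty$ of $(y_n)$ and using the $\ell_1$-structure of the differences $y_n - y_\infty$, a gliding-hump/Bessaga--Pe\l czy\'nski style extraction concentrates them on a subsequence of the standard basis; a further extraction provides $(e^*_{n_k})$ which is $w^*$-convergent to a norm-one $e^* \in \ell_1$. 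The sign condition $e^*(n_k) \geq 0$ is then forced by Karlovitz extremality: a subsequence along which $e^*(n_k) < 0$ would allow one to build $z \in K$ from those coordinates with $\limsup_n \|y_n - z\| < \mathrm{diam}(K)$, contradicting the lemma.

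The principal obstacle is this final implication: one must convert the purely geometric consequence of minimality into the precise coordinate-wise statement (4), extracting in particular the sign condition. This is exactly what removes the global basis-convergence hypothesis of Theorem 8 in \cite{Japon-Prus2004}, and it is the step that requires genuinely new work; the other implications are either direct transfers along quotient maps or applications of tools already developed in Section \ref{section W_f} and Section \ref{Section separable}.
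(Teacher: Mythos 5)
Your overall architecture (a cycle through (1), (4), (5), (6) plus side arrows into (2) and (3)) is close to the paper's, and your $(4)\Rightarrow(5)$ is essentially the paper's argument via Alspach's lemmas and Corollary 4.4 of \cite{Casini-Miglierina-Piasecki2014} (modulo a slip: $Y^{\perp}$ is the closed span of $\{e^*_{n_k}\}\cup\{e^*\}$, not the full $\ell_1$-subspace supported on $\{n_k\}\cup\mathrm{supp}(e^*)$, which is strictly larger whenever $\mathrm{supp}(e^*)\setminus\{n_k\}$ has more than one element). But two of your side arrows are broken. For $(6)\Rightarrow(2)$ you claim the nonexpansive map $T$ from Theorem \ref{W_f in a predual of X} is an isometry because ``both inequalities collapse to equalities''; they do not. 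That chain passes through a restriction from $X^*$ to $W_f^*$ and through the triangle inequality $\sum_j|\alpha_j|\,\|x^*_{n_j}\|\geq\|\sum_j\alpha_j x^*_{n_j}\|$, and the $x^*_{n_j}$ are merely norm-preserving extensions of the coordinate functionals: nothing forces them to be isometrically equivalent to the $\ell_1$-basis in $X^*$, so $T$ is only nonexpansive. The paper instead derives (2) from (4), where the set $C$ is built inside $\ell_1$ from the genuine basis vectors $e^*_{n_j}$ together with the disjointly supported vector $w=u^*/\|u^*\|$ (note $u^*(n_k)=0$ for all $k$), so all norms are computed exactly and the shift is an isometry; (3) then follows by the Burns--Lennard--Sivek average $S=\sum_j 2^{-(j+1)}T^j$ \cite{Burns-Lennard-Sivek}. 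Your $(6)\Rightarrow(3)$ via Lennard's example also fails: that example lives in $c^*$ and transferring it requires a $w^*$-continuous isometric embedding of $c^*$, i.e.\ a $1$-complemented copy of $c$ in $X$, which a ``bad'' $W_g$ in a quotient does not provide (Example \ref{example bad W-f} is exactly a counterexample to that containment).

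The more serious gap is $(1)\Rightarrow(4)$, which you correctly identify as the heart but only gesture at. The approximate-fixed-point machinery (minimal invariant sets, Goebel--Karlovitz, as packaged in the proof of Theorem 8 of \cite{Japon-Prus2004}) yields a sequence $(x_n^*)$ in $\ell_1$ with $\|x_n^*\|=1$, $\|x_n^*+x_m^*\|=2$, coordinatewise convergence to $0$, and a $w^*$-limit $x_0^*$ of norm one --- but these are \emph{not} basis vectors, and a gliding-hump extraction concentrates them on disjoint \emph{blocks} of the basis, not on single coordinates. The difficulty is twofold: one must pass from blocks to individual basis elements $e^*_{n_k}$, and one must guarantee that the $w^*$-limit of the extracted subsequence of the basis has norm \emph{one} (a $w^*$-limit of basis vectors can perfectly well have norm $<1$, or be $0$ as for $c_0$). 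The paper handles this by normalizing the blocks so that their positive and negative masses are constant ($s^+,s^-$), building by a diagonal argument a norming sequence $(x_m)\subset B_X$ with $z_n^*(x_m)>1-s^+/8^m$ for all $m,n$, and then running a counting argument with the sets $E_m^{(n)}$ and $G^{(n)}$ to find in each positive block a single coordinate $i$ with $e_i^*(x_m)>1-2^{-m}$ for every $m$ simultaneously; the subsequence of the basis indexed by these coordinates then satisfies $\|e^*_{n_{k_0}}+e^*\|\geq 2$, which delivers both $\|e^*\|=1$ and $e^*(n_k)\geq0$ in one stroke. Your appeal to ``Karlovitz extremality forcing the sign condition'' addresses neither the norm-one requirement nor the block-to-coordinate passage, so as written the implication is not established.
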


\begin{proof}
We divide the proof of this theorem in several parts. First of all
we remark that some implications are straightforward to prove.
Indeed, it is easy to check that $(\ref{item
isometry})\Longrightarrow (\ref{item noFPP})$, $(\ref{item
contractive})\Longrightarrow (\ref{item noFPP})$ and $(\ref{item
quotient isometric})\Longrightarrow (\ref{item quotient
contained})$. The implication $(\ref{item quotient
contained})\Longrightarrow (\ref{item noFPP})$ follows immediately
from Remark \ref{W_f + quotient}.

$(\ref{item subsequence})\Longrightarrow (\ref{item isometry})$ and
$(\ref{item subsequence})\Longrightarrow (\ref{item contractive})$.
By adapting to our setting the method developed in the last part of
Theorem 8 in \cite{Japon-Prus2004}, we obtain a $w^*$-compact and
convex set $C\subset \ell_1$ and an isometry $T:C\rightarrow C$
fixed point free.  Moreover, by following the idea of
\cite{Burns-Lennard-Sivek}, we consider the mapping $S:C\rightarrow
C$ defined as
\[
S(x)=\sum _{j=0}^{\infty}\frac{T^j(x)}{2^{j+1}},
\]
where $T$ is as above. It is easy to prove that the mapping $S$ is a
fixed point free contractive mapping.

$(\ref{item subsequence}) \Longrightarrow (\ref{item quotient isometric}).$
By choosing a subsequence we may assume that
$u^*=e^*-\sum_{k=2}^{\infty}e^*(n_k) e^*_{n_k}\neq 0$. Put
$x_1^*=\frac{u^*}{\left\|u^*\right\|}$ and $x_k^*=e^*_{n_k}$ for $k
\geq 2$. It is easy to see that $(x_k^*)_{k \in \mathbb{N}}$ is
normalized sequence which is equivalent to the standard basis in
$\ell_1$. Let us denote by $Y=\left[\left\{x_k^*: k\in
\mathbb{N}\right\}\right]$ the closed linear span of $\left\lbrace x_k^*: k\in
\mathbb{N}\right\rbrace $. Since $\overline{\left\{x_k^*: k\in
\mathbb{N}\right\}}^{\textrm{w}^*}=\left\{x_k^*: k\in
\mathbb{N}\right\}\cup \left\{e^*\right\}\subset Y$, Lemma 1 in
\cite{A1992} guarantees that $\overline{\left[\left\{x_k^*: k\in
\mathbb{N}\right\}\right]}^{\textrm{w}^*}=Y$. Let us consider $W_f \subset
c$ where
\[
f=\left(-\frac12, \frac12
\left(1-\sum_{k=2}^{\infty}e^*(n_k)\right),\frac12 e^*(n_2), \frac12
e^*(n_3), \frac12 e^*(n_4),\dots \right).
\]
Then, by recalling Definition \ref{definition bad W_f}, we have that $W_f$ is
a "bad" $W_f$. Let $(y_n^*)_{n \in \mathbb{N}}$ denote the
standard basis in $\ell_1=W_f^*$. We shall consider two cases.
Suppose $\sum_{k=2}^{\infty}e^*(n_k)>0$. Then, applying Theorem 4.3
in \cite{Casini-Miglierina-Piasecki2014}, we obtain $
y_{n}^*\overset{\sigma(\ell_1,W_f)}{\longrightarrow}y^*$, where
$y^*=\left(1-\sum_{k=2}^{\infty}e^*(n_k),e^*(n_2),e^*(n_3),e^*(n_4),\dots\right)$.
Let $\phi$ be the basis to basis map of $Y$ onto $\ell_1=W_f^*$,
$\phi\left(\sum_{k=1}^{\infty}a_k
x_k^*\right)=\sum_{k=1}^{\infty}a_k y_k^*.$ Then we have
\begin{eqnarray*}
\phi(e^*)&=&\phi \left(u^*+ \sum_{k=2}^{\infty} e^*(n_k)
e^*_{n_k}\right)=\phi \left(\left\|u^*\right\| x_1^*+
\sum_{k=2}^{\infty} e^*(n_k) x^*_k\right) \\&=&\left\|u^*\right\|
y_1^*+ \sum_{k=2}^{\infty} e^*(n_k) y^*_k =
\left(1-\sum_{k=2}^{\infty}e^*(n_k),e^*(n_2),e^*(n_3),\dots\right)=y^*.
\end{eqnarray*}
Consequently, $\phi$ is a $w^*$-continuous homeomorphism from
$\overline{\left\{x_k^*: k\in \mathbb{N}\right\}}^{\textrm{w}^*}$
onto $\overline{\left\{y_k^*: k\in
\mathbb{N}\right\}}^{\textrm{w}^*}=\left\{y_k^*: k\in
\mathbb{N}\right\} \cup \left\{y^*\right\}$. So, in view of Lemma 2
in \cite{A1992} we see that $\phi$ is a $w^*$-continuous isometry
from $Y$ onto $\ell_1=W_f^*$. This implies that $W_f$ is isometric
to $X/^{\bot}Y$.
Finally, if $\sum_{k=2}^{\infty}e^*(n_k)=0$ then $W_f$ is isometric to $c$. By following the same reasoning as above, we easily conclude that $c$ is isometric to a quotient of $X$.

To conclude the proof it remains to show that (\ref{item noFPP})$\Longrightarrow$(\ref{item subsequence}). This part is the key point of the whole proof and we split it in several steps for the sake of convenience of the reader.

$(\ref{item noFPP})\Longrightarrow(\ref{item subsequence})$.\textbf{The Final Step.} Suppose that we have already constructed a
sequence $(x_m)_{m \in \mathbb{N}} \subset B_X$, a
$\sigma(\ell_1,X)$-convergent subsequence $(e_{n_k}^*)_{k\in
\mathbb{N}}$ of the standard basis $(e_{n}^*)_{n\in \mathbb{N}}$ in
$\ell_1=X^*$ and a null sequence $(\varepsilon_m)_{m \in
\mathbb{N}}$ in $(0,1)$ such that for all $k,m \in \mathbb{N}$ we
have $e_{n_k}^*(x_m)>1-\varepsilon_m.$ If $e^*$ denotes the
$\sigma(\ell_1,X)$-limit of $(e_{n_k}^*)_{k\in \mathbb{N}}$, then
$\left\|e^*\right\|=1$ and $e^*(n_k)\geq 0$ for all $k\in
\mathbb{N}$.

Indeed, let $k_0 \in \mathbb{N}$ be arbitrarily chosen. Since
$e^*_{n_k}(x_m)\overset{k}{\longrightarrow}e^*(x_m)$, we get
$e^*(x_m)\geq 1-\varepsilon_m$. Consequently, for each $m \in
\mathbb{N}$, we have
$$e^*_{n_{k_0}}(x_m)+e^*(x_m)>1-\varepsilon_m+1-\varepsilon_m=2-2\varepsilon_m.$$
Hence, $\left\|e^*_{n_{k_0}}+e^*\right\|\geq 2$, from which our
thesis follows at once.

\textit{In the sequel we present how to construct sequences
$(x_m)_{m \in \mathbb{N}}$, $(e_{n_k}^*)_{k\in \mathbb{N}}$ and
$(\varepsilon_m)_{m \in \mathbb{N}}$ described above.}

\textbf{Step 1.} \textit{The sequence $(x_n^*)_{n \in \mathbb{N}\cup
\left\{0\right\}}$.} Assume that $\ell_1$ lacks the
$\sigma(\ell_1,X)$-FPP. Then, from the proof of Theorem 8 in
\cite{Japon-Prus2004}, we know that there is a sequence $(x_n^*)_{n
\in \mathbb{N}\cup \left\{0\right\}}$ in $\ell_1$ with the following
properties:

\begin{enumerate}
  \item[(i)] $x_n^*\overset{\sigma(\ell_1,X)}{\longrightarrow}x_0^*$,
  \item[(ii)] $\left(x_n^*\right)_{n \in \mathbb{N}}$ tends to $0$ coordinatewise,
  \item[(iii)] $\lim\limits_{n\rightarrow
  \infty}\left\|u^*-x_n^*\right\|=2$ for every $u^* \in \textrm{conv} \left\{x_n^*:
  n\geq0\right\}$,
  \item[(iv)] $\lim\limits_{n\rightarrow
  \infty}\left\|x_n^*\right\|=1=\left\|x_0^*\right\|$.
\end{enumerate}

Now, using (ii), (iii) and (iv), one may observe that for every
$n\in \mathbb{N}$ we have
$$2=\lim\limits_{m\rightarrow \infty}\left\|x_n^*-x_m^*\right\|=\left\|x_n^*\right\|+
\lim\limits_{m\rightarrow
\infty}\left\|x_m^*\right\|=\left\|x_n^*\right\|+1$$ and,
consequently,
\begin{enumerate}
  \item[(v)] $\left\|x_n^*\right\|=1$ for all $n\geq 0$.
\end{enumerate}
Again, using (ii), (iii) and (iv), one may notice that for all $m,n
\in \mathbb{N}\cup \left\{0\right\}$ we obtain
$$2=\lim\limits_{k\rightarrow \infty}\left\|\frac12\left(x_n^*+x_m^*\right)-x_k^*\right\|=
\left\|\frac12\left(x_n^*+x_m^*\right)\right\|+
\lim\limits_{k\rightarrow
\infty}\left\|x_k^*\right\|=\left\|\frac12\left(x_n^*+x_m^*\right)\right\|+1,$$
hence,
\begin{enumerate}
  \item[(vi)] $\left\|x_n^*+x_m^*\right\|=2$ for all $m,n
\in \mathbb{N}\cup \left\{0\right\}$.
\end{enumerate}
Taking into account (v) and (vi) we easily conclude that
\begin{enumerate}
  \item[(vii)] $x_n^*(i)\cdot x_m^*(i)\geq0$ for all $m,n
\in \mathbb{N}\cup \left\{0\right\}$ and for every $i \in
\mathbb{N}$.
\end{enumerate}

From now on we set $\sum_{i\in \emptyset}a_i:=0$.

\textbf{Step 2.} \textit{Grinding of the sequence $(x_n^*)_{n \in
\mathbb{N}\cup \left\{0\right\}}$.} Let $(x_n^*)_{n \in
\mathbb{N}\cup \left\{0\right\}}$ be as above. We show that there is
a sequence $(y_k^*)_{k \in \mathbb{N}\cup \left\{0\right\}}$ in
$\ell_1$ and numbers $s^+ \in (0,1]$, $s^- \in (-1,0]$ such that

\begin{enumerate}
  \item[(a)] $\left\|y_k^*\right\|=1$ for every $k\geq 0$,
  \item[(b)]  for every $k\in \mathbb{N}$ the set $\textrm{supp } y_k^*:=\left\{i\in \mathbb{N}:y_k^*(i)\neq 0
  \right\}$ is finite and $\max \textrm{supp } y_k^* < \min \textrm{supp } y_{k+1}^*$,
  \item[(c)] $y_m^*(i)\cdot y_n^*(i)\geq0$ for all $m,n
\in \mathbb{N}\cup \left\{0\right\}$ and for every $i \in
\mathbb{N}$,
  \item[(d)] for every $k\in \mathbb{N}$
  $$s^+(y_k^*):= \sum_{i\in \textrm{supp$_+$ } y_{k}^*}y_k^*(i)=s^+
  \quad \textrm{and} \quad s^-(y_k^*):= \sum_{i\in \textrm{supp$_-$ } y_{k}^*}y_k^*(i)=s^-,$$
  where $\textrm{supp$_+$ } y_{k}^*:= \left\{i\in \mathbb{N}:y_k^*(i)>0
  \right\}$, $\textrm{supp$_-$ } y_{k}^*:= \left\{i\in \mathbb{N}:y_k^*(i)<0
  \right\},$
  \item[(e)] $y_k^*\overset{\sigma(\ell_1,X)}{\longrightarrow}y_0^*$.
\end{enumerate}

Indeed, using (ii) and (v), we can choose a subsequence
$(x_{n_k}^*)_{k \in \mathbb{N}}$ of $(x_n^*)_{n \in \mathbb{N}}$ and
a sequence $(m_k)_{k \in \mathbb{N}\cup \left\{0\right\}}$, $m_k \in
\mathbb{N}\cup \left\{0\right\}$,
$0=m_0<m_1<m_2<\dots<m_k<m_{k+1}<\dots$, such that for every $k\in
\mathbb{N}$
\begin{equation}\label{inequlity for x_n_k}
\sum_{i=m_{k-1}+1}^{m_k}\left|x_{n_k}^*(i)\right|>1-\frac{1}{2^k}.
\end{equation}
Now, for every $k\in \mathbb{N}$ we put
$\widetilde{x_{n_k}^*}=\sum_{i=m_{k-1}+1}^{m_k}x_{n_k}^*(i)e_i^*$
and
$\widetilde{\widetilde{x_{n_k}^*}}=\frac{\widetilde{x_{n_k}^*}}{\left\|\widetilde{x_{n_k}^*}\right\|}.$
We can assume that the limits $\lim\limits_k
s^+\left(\widetilde{\widetilde{x_{n_k}^*}}\right)$ and
$\lim\limits_k s^-\left(\widetilde{\widetilde{x_{n_k}^*}}\right)$
exist, and let $s_0^+:=\lim\limits_k
s^+\left(\widetilde{\widetilde{x_{n_k}^*}}\right)$ and
$s_0^-:=\lim\limits_k
s^-\left(\widetilde{\widetilde{x_{n_k}^*}}\right).$ Clearly,
$s_0^+\in [0,1]$, $s_0^-\in [-1,0]$ and $s_0^+-s_0^-=1$. We shall
consider two cases.

First, suppose $s_0^+>0$. Then we can assume that
$s^+\left(\widetilde{\widetilde{x_{n_k}^*}}\right)>0$ for all $k\in
\mathbb{N}$. Further, suppose $s_0^-<0$. Then we can also assume
that $s^-\left(\widetilde{\widetilde{x_{n_k}^*}}\right)<0$ for all
$k\in \mathbb{N}$. Define the sequence $(y_k^*)_{k \in
\mathbb{N}\cup \left\{0\right\}}$ as $y_0^*=x_0^*$ and for $k\in
\mathbb{N}$

\begin{equation*}
y_k^*:=\frac{s_0^+}{s^+\left(\widetilde{\widetilde{x_{n_k}^*}}\right)}
\cdot \sum_{i \in \textrm{supp$_+$
}\widetilde{\widetilde{x_{n_k}^*}}}\widetilde{\widetilde{x_{n_k}^*}}(i)\cdot
e_i^*+\frac{s_0^-}{s^-\left(\widetilde{\widetilde{x_{n_k}^*}}\right)}
\cdot \sum_{i \in \textrm{supp$_-$
}\widetilde{\widetilde{x_{n_k}^*}}}\widetilde{\widetilde{x_{n_k}^*}}(i)\cdot
e_i^*.
\end{equation*}
Obviously, conditions (a), (b) and (c) are satisfied. Moreover,
$s^+(y_k^*)=s_0^+$ and $s^-(y_k^*)=s_0^-$, so in order to obtain (d)
it is enough to take $s^+=s_0^+$ and $s^-=s_0^-$. We shall prove
that (e) holds too. Indeed, by considering (\ref{inequlity for
x_n_k}), (i) and (v), we get $\lim\limits_k
\left\|\widetilde{x_{n_k}^*}\right\|=1$, $w^*\textrm{-}\lim\limits_k
\textrm{ } (\sum_{i=m_{k-1}+1}^{m_k}x_{n_k}^*(i) \cdot e_i^*)=x_0^*$
and, consequently, $w^*\textrm{-}\lim\limits_k y_k^*=x_0^*=y_0^*$,
as we desired.

If $s_0^-=0$, then $s_0^+=1$ and we can assume that
$s^+\left(\widetilde{\widetilde{x_{n_k}^*}}\right)>0$ for all $k\in
\mathbb{N}$. We define the sequence $(y_k^*)_{k \in \mathbb{N}\cup
\left\{0\right\}}$ as $y_0^*=x_0^*$ and for $k\in \mathbb{N}$ we put
$$y_k^*=\frac{s_0^+}{s^+\left(\widetilde{\widetilde{x_{n_k}^*}}\right)}
\cdot \sum_{i \in \textrm{supp$_+$
}\widetilde{\widetilde{x_{n_k}^*}}}\widetilde{\widetilde{x_{n_k}^*}}(i)\cdot
e_i^*.$$ It is easy to see that the properties (a), (b), (c), (e),
and (d) with $s^+:=s_0^+=1$ and $s^-:=s_0^-=0$ are satisfied.

Suppose $s_0^+=0$. Then $s_0^-=-1$ and we can assume that
$s^-\left(\widetilde{\widetilde{x_{n_k}^*}}\right)<0$ for all $k\in
\mathbb{N}$. Now, it is enough to define $(y_k^*)_{k \in
\mathbb{N}\cup \left\{0\right\}}$ as $y_0^*=-x_0^*$ and for $k\in
\mathbb{N}$
$$y_k^*=-\frac{s_0^-}{s^-\left(\widetilde{\widetilde{x_{n_k}^*}}\right)}
\cdot \sum_{i \in \textrm{supp$_-$
}\widetilde{\widetilde{x_{n_k}^*}}}\widetilde{\widetilde{x_{n_k}^*}}(i)\cdot
e_i^*.$$ Then, for every $k\in \mathbb{N}$, $s^+(y_k^*)=-s_0^-=1$.
Obviously, all the properties (a), (b), (c), (e) and (d) with
$s^+=1$ and $s^-=0$ are satisfied.

\textbf{Step 3.} \textit{A construction of the sequence $(x_m)_{m
\in \mathbb{N}}$.} Let $(y_k^*)_{k \in \mathbb{N}\cup
\left\{0\right\}}$, $s^-\in(-1,0]$ and $s^+ \in (0,1]$ be as above.
By using (a) and (e), we can choose $x_1 \in B_X$ and $k_1\in
\mathbb{N}$ such that $y_0^*(x_1)>1-\frac{s^+}{8}$ and
$y_k^*(x_1)>1-\frac{s^+}{8}$ for all $k\geq k_1$. Next, using (a)
and (c) we can choose $x_2 \in B_X$ such that
$y_0^*(x_2)>1-\frac{s^+}{8^2}$ and
$y_{k_1}^*(x_2)>1-\frac{s^+}{8^2}$. Moreover, the property (e)
implies that there is $k_2>k_1$ such that for all $k\geq k_2$ we
have $y_k^*(x_2)>1-\frac{s^+}{8^2}$. Further, using (a), (c) and
(e), we can choose $x_3\in B_X$ and $k_3>k_2$ such that
$y_0^*(x_3)>1-\frac{s^+}{8^3}$, $y_{k_1}^*(x_3)>1-\frac{s^+}{8^3}$,
$y_{k_2}^*(x_3)>1-\frac{s^+}{8^3}$ and
$y_k^*(x_3)>1-\frac{s^+}{8^3}$ for all $k\geq k_3$. Continuing this
inductive procedure, we construct a sequence $(x_m)_{m\in
\mathbb{N}}\subset B_X$ and a subsequence $(y_{k_n}^*)_{n \in
\mathbb{N}}$ of $(y_k^*)_{k \in \mathbb{N}\cup \left\{0\right\}}$
such that $y_{k_n}^*(x_m)>1-\frac{s^+}{8^m}$ for all $m,n\in
\mathbb{N}$.

For each $n\in \mathbb{N}$ we put $z_n^*=y_{k_n}^*$. Then the
sequence $(z_n^*)_{n \in \mathbb{N}}$ has the following properties:

\begin{enumerate}
  \item[(a')] for every $n\in \mathbb{N}$ the set $\textrm{supp$_+$ } z_n^*$ is nonempty, $\textrm{supp } z_n^*$ is finite, and
  $\max \textrm{supp } z_n^* < \min \textrm{supp } z_{n+1}^*$,
  \item[(b')] $z_n^*(x_m)>1-\frac{s^+}{8^m}$ for all $m,n \in
\mathbb{N}$,
  \item[(c')] for every $n\in \mathbb{N}$, $s^+(z_n^*)=s^+$ and $s^-(z_n^*)=s^-$.
\end{enumerate}

\textbf{Step 4.} \textit{A construction of the sequences
$(e_{n_k}^*)_{k\in \mathbb{N}}$ and $(\varepsilon_m)_{m \in
\mathbb{N}}$.} Let $(z_n^*)_{n \in \mathbb{N}}$, $s^-$ and $s^+$ be
as above. For each $m,n \in \mathbb{N}$ we define the set
$$E_m^{(n)}=\left\{ i\in \textrm{supp$_+$ } z_{n}^*: e_i^* (x_m)>1- \frac{1}{2^m}\right\}.$$
Then, using (c'), we have

\begin{eqnarray*}
\sum_{i\in \textrm{supp$_+$ } z_{n}^*}z_{n}^*(i)\cdot e_i^* (x_m)
&=&\sum_{i\in E_m^{(n)}}z_{n}^*(i)\cdot e_i^* (x_m)+\sum_{i\in
(\textrm{supp$_+$ } z_{n}^*) \setminus E_m^{(n)}}z_{n}^*(i)\cdot
e_i^* (x_m)
\\&\leq&\sum_{i\in E_m^{(n)}}z_{n}^*(i)+
\left(1-\frac{1}{2^m}\right) \cdot \sum_{i\in (\textrm{supp$_+$ }
z_{n}^*) \setminus E_m^{(n)}}z_{n}^*(i)
\\&=&\left(1-\frac{1}{2^m}\right) \cdot \sum_{i\in \textrm{supp$_+$ }
z_{n}^*}z_{n}^*(i)+ \frac{1}{2^m} \cdot \sum_{i\in
E_m^{(n)}}z_{n}^*(i)
\\&=& \left(1-\frac{1}{2^m}\right) \cdot s^+ +\frac{1}{2^m} \cdot \sum_{i\in
E_m^{(n)}}z_{n}^*(i).
\end{eqnarray*}
On the other hand, using (b') and (c'), we get
\begin{eqnarray*}
\sum_{i\in \textrm{supp$_+$ } z_{n}^*}z_{n}^*(i)\cdot e_i^* (x_m)
&=&\sum_{i\in \textrm{supp } z_{n}^*}z_{n}^*(i)\cdot e_i^* (x_m)-
\sum_{i\in \textrm{supp$_-$ } z_{n}^*}z_{n}^*(i)\cdot e_i^* (x_m)
\\&>&
1-\frac{s^+}{8^m}+s^-=1-\frac{s^+}{8^m}-1+s^+=\left(1-\frac{1}{8^m}\right)\cdot
s^+.
\end{eqnarray*}
The above implies that
$$\left(1-\frac{1}{8^m}\right)\cdot s^+< \left(1-\frac{1}{2^m}\right) \cdot s^+ +\frac{1}{2^m} \cdot \sum_{i\in
E_m^{(n)}}z_{n}^*(i),$$ so
$$\sum_{i\in
E_m^{(n)}}z_{n}^*(i) \geq \left( 1- \frac{1}{4^m}\right) \cdot
s^+.$$ The above calculations also show that for any $m,n \in
\mathbb{N}$ the set $E_m^{(n)}$ is nonempty and
\begin{equation}\label{evaluation 1}
\sum_{i\in (\textrm{supp$_+$ } z_{n}^*) \setminus
E_m^{(n)}}z_{n}^*(i) \leq \frac{1}{4^m}\cdot s^+.
\end{equation}

For each $m,n\in \mathbb{N}$ we define the set
$F_m^{(n)}=\bigcap\limits_{j=1}^m E_j^{(n)}$. Obviously, for every
$n\in \mathbb{N}$, $F_1^{(n)}\supseteq F_2^{(n)}\supseteq
F_3^{(n)}\supseteq \dots$. We claim that for every $n \in
\mathbb{N}$ and $m \in \mathbb{N}$ the set $F_m^{(n)}$ is nonempty.
Indeed, suppose that there is $n \in \mathbb{N}$ and $m \in
\mathbb{N}$ such that $F_m^{(n)}=\emptyset$. Then, $\textrm{supp$_+$
} z_{n}^*=\bigcup\limits_{j=1}^m (\textrm{supp$_+$ } z_{n}^*
\setminus E_j^{(n)})$, so taking into account (\ref{evaluation 1})
and (c'), we obtain a contradiction,
$$\frac12 s^+> \sum_{j=1}^{m}\frac{1}{4^j}s^+ \geq \sum_{j=1}^{m}
\sum_{i\in (\textrm{supp$_+$ } z_{n}^*) \setminus
E_j^{(n)}}z_{n}^*(i)\geq \sum_{i\in \textrm{supp$_+$ }
z_{n}^*}z_{n}^*(i)=s^+.$$

Since for every $m\in \mathbb{N}$ and $n\in \mathbb{N}$ the set
$F_m^{(n)}$ is nonempty and, in view of (a'), $F_1^{(n)}$ is finite,
we conclude that the set $G^{(n)}:=\bigcap\limits_{m=1}^{\infty}
F_m^{(n)}$ is nonempty, for every $n\in \mathbb{N}$. Clearly,
$$G^{(n)}=\bigcap\limits_{m=1}^{\infty}
E_m^{(n)}=\left\{i\in \textrm{supp$_+$ } z_{n}^*:
e_i^*(x_m)>1-\frac{1}{2^m} \textrm{ for all } m\in
\mathbb{N}\right\}.$$ Moreover, using (a') we see that $G^{(i)}\cap
G^{(j)}=\emptyset$ provided $i\neq j$. Hence, the set
$\bigcup\limits_{j=1}^{\infty}G^{(j)}$ is infinite. Take any
$\sigma(\ell_1,X)$-convergent subsequence $(e_{n_k}^*)_{k\in
\mathbb{N}}$ of $(e_n^*)_{n\in
\bigcup\limits_{j=1}^{\infty}G^{(j)}}$. Then, for every $m\in
\mathbb{N}$ and $k\in \mathbb{N}$, we have
$e_{n_k}^*(x_m)>1-\frac{1}{2^m}$. Apply now \textbf{The Final Step},
with $\varepsilon_m=\frac{1}{2^m}$. The proof of $(\ref{item
noFPP})\Longrightarrow (\ref{item subsequence})$ is finished.

\end{proof}

\begin{rem}
The spaces "bad" $W_f$ and $W_g$ in the statements $(\ref{item quotient isometric})$ and $(\ref{item quotient contained})$ of Theorem \ref{OGT} cannot be replaced by the space $c$ (see Example \ref{example bad W-f}).
\end{rem}


We conclude the paper by pointing out an issue related to our results that still remain as open problem.
Let $X$ be a predual of
$\ell_1$. Theorem \ref{W_f in a predual of X} implies that the existence of an isometric copy of a "bad" $W_f$ in $X$ ensures the failure of $\sigma(\ell_1,X)$-FPP. On the other hand, Theorem \ref{OGT} provides a necessary and sufficient condition for the failure of $\sigma(\ell_1,X)$-FPP based on the existence of a quotient of $X$ isometric to a "bad" $W_f$. Taking into account these two facts, one natural question still unanswered is whether the lack of $\sigma(\ell_1,X)$-FPP implies that $X$ contains an
isometric copy of a "bad" $W_f$.

\end{document}